\documentclass[11pt]{article}
\usepackage{amsmath,amssymb,amsthm}
\usepackage[margin=2.5cm]{geometry}
\usepackage{hyperref}

\newtheorem{theorem}{Theorem}[section]
\newtheorem{lemma}[theorem]{Lemma}
\newtheorem{proposition}[theorem]{Proposition}
\newtheorem{corollary}[theorem]{Corollary}
\theoremstyle{definition}
\newtheorem{definition}[theorem]{Definition}
\newtheorem{remark}[theorem]{Remark}

\newcommand{\R}{\mathbb{R}}
\newcommand{\Sym}{\mathbb{S}}
\newcommand{\SymA}{\operatorname{Sym}}
\newcommand{\Th}{\Theta}
\newcommand{\tr}{\operatorname{tr}}
\newcommand{\cyc}{\overset{\mathrm{cyc}}{\sim}}
\newcommand{\ab}[1]{\check{#1}}

\title{Homogeneous symmetric trace-positive sextics\\
with vanishing abelianization\\
are sums of Hermitian squares and commutators}
\author{Daniel Keren\thanks{Department of Computer Science, University of
Haifa, Haifa 3498838, Israel. Email: \texttt{dkeren@ds.haifa.ac.il}}}
\date{}

\begin{document}
\maketitle

\begin{abstract}
Let $\R\langle X,Y\rangle$ be the free algebra on two symmetric letters. A
polynomial is \emph{trace-positive} if it has non-negative trace on every pair
of real symmetric matrices, and it lies in the cone $\Th$ if it is cyclically
equivalent to a sum of Hermitian squares. Burgdorf and Klep proved that the
two notions coincide in degree at most four; from degree six onwards they
differ. We provide a complete and explicit description of
both cones on the six-dimensional space $Z$ of \emph{symmetric} homogeneous
sextics in two symmetric letters whose abelianization vanishes identically,
and show that on
$Z$ they coincide: for $f\in Z$, trace-positivity is equivalent to membership
in $\Th_3$ (the sub-cone of elements cyclically equivalent to a sum of
Hermitian squares of polynomials of degree at most three), to non-negativity of
the trace on pairs of $2\times2$ symmetric
matrices, and to being cyclically equivalent to a sum of four Hermitian
squares of cubics with vanishing abelianization. 
Membership in $\Th$ reduces to a $4\times4$ positive semidefinite Gram matrix
whose two diagonal blocks are fixed by the content-$(4,2)$ and $(2,4)$ parts
of $f$; each of those parts ranges over a two-dimensional cone cut out by two
linear facets, and the content-$(3,3)$ part is then confined to a rectangle
whose two sides are the geometric means $\sqrt{p_+q_+}$ and
$\sqrt{p_-q_-}$ of the eigenvalues $p_\pm$, $q_\pm$ of those blocks.
Each facet is certified by one of two 
families of pairs of $2\times2$ symmetric matrices. The hypothesis on the abelianization is the natural
one: every bivariate degree-six example we are aware of -- a trace-positive
polynomial outside $\Th$ -- comes from the commutative gap between non-negative
and sum-of-squares polynomials, by lifting a non-negative, non-SOS polynomial in
two variables to a cyclically
sorted polynomial; and that lifting cannot operate when the abelianization
vanishes. Symmetry of $f$ is essential: dropping it adds one antisymmetric direction,
on which the trace vanishes identically, yet every non-zero element of which
lies outside $\Th$ (Remark~\ref{rem:needsym}).
\end{abstract}

\medskip
\noindent\textbf{2020 Mathematics Subject Classification.}
Primary 13J30; Secondary 11E25, 15A45, 47A57, 90C22.

\medskip
\noindent\textbf{Key words and phrases.}
Trace-positive polynomials, sum of Hermitian squares, cyclic equivalence,
free algebra, tracial moment problem, semidefinite programming.

\section{Introduction}

\subsection{The two cones}

Write $\R\langle X,Y\rangle$ for the free unital $\R$-algebra on two letters,
equipped with the involution fixing $X$ and $Y$ and reversing products, and
write $\Sym_n(\R)$ for the space of real symmetric $n\times n$ matrices. Two
polynomials are \emph{cyclically equivalent}, written $f\cyc g$, if $f-g$ is a
sum of commutators; equivalently, if for each cyclic class of words, the total coefficient of $f$ on
that class equals that of $g$. Let
\[
  \Th_d \;=\; \Big\{ f \;:\; f \cyc \textstyle\sum_i g_i^* g_i, \ \deg g_i \le d \Big\},
  \qquad \Th=\bigcup_d \Th_d ,
\]
and call $f$ \emph{trace-positive} if $\tr f(A,B)\ge 0$ for all
$A,B\in\Sym_n(\R)$ and all $n$. Since $\tr(g^*g)\ge0$ and traces annihilate
commutators,
\begin{equation}\label{eq:inclusion}
  \Th \;\subseteq\; \{\text{trace-positive}\},
\end{equation}
and the question of when the inclusion is strict is the tracial analogue of
Hilbert's problem on non-negative polynomials versus sums of squares. 
For background on $\Th$, trace-positivity and the associated semidefinite
relaxations, see the monograph of Burgdorf, Klep and Povh~\cite{BKP}.
Deciding membership in $\Th$ is dual to a truncated tracial moment
problem~\cite{BK-JOT}; Quarez~\cite{Quarez} gives trace-positive polynomials
of degree four in three variables lying outside $\Th$; and a global Positivstellensatz for trace-positive
polynomials, with its bearing on Connes' embedding problem, is given by Klep,
Scheiderer and Vol\v{c}i\v{c}~\cite{KSV}.

Two facts frame what follows. First, there is no gap in low degree:
Burgdorf and Klep~\cite{BK-CR} proved that for $f\in\R\langle X,Y\rangle$ of
degree at most four the following are equivalent -- $f$ is trace-positive;
$f$ is trace-positive on $2\times2$ symmetric matrices; $f$ is cyclically
equivalent to a sum of four Hermitian squares; $f\in\Th$. Second, the gap
appears at degree six, the standard witness being the noncommutative Motzkin
polynomial
\begin{equation}\label{eq:ncmotzkin}
  M_{nc} \;=\; 1 + X^2Y^4 + Y^2X^4 - 3X^2Y^2 ,
\end{equation}
which is the cyclic-equivalence normal form of the polynomial
$YX^4Y+XY^4X-3XY^2X+1$ of Klep and Schweighofer~\cite[Ex.\ 4.4]{KS}.

\subsection{Why the abelianization}

The reason $M_{nc}\notin\Th$ is worth stating precisely, because it motivates the hypothesis we impose. Following~\cite[Def.\ 4.1]{KS}, call $f$ \emph{cyclically
sorted} if every word occurring in $f$ is cyclically equivalent to a word of
the form $X^aY^b$. Let $\ab f\in\R[x,y]$ denote the \emph{abelianization} (or
\emph{commutative collapse}) of $f$, obtained by letting the letters commute.

\begin{proposition}[{Burgdorf~\cite{Bur-thesis}, Prop.\ 3.3--3.4}]
\label{prop:sorted}
If $f$ is cyclically sorted then $f$ is trace-positive if and only if $\ab f$
is non-negative on $\R^2$, and $f\in\Th$ if and only if $\ab f$ is a sum of
squares in $\R[x,y]$.
\end{proposition}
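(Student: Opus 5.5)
Both equivalences split into an easy direction and a substantive one; the substantive directions use the two structural facts that define cyclic sortedness.

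\emph{Trace-positivity.} If $f$ is trace-positive, evaluate at commuting $1\times1$ matrices $A=a$, $B=b$. Then $\tr f(a,b)=\ab f(a,b)\ge0$, so $\ab f\ge0$ on $\R^2$.

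For the converse, assume $\ab f\ge0$. Since $f$ is cyclically sorted, $f\cyc\sum_{a,b} c_{ab}X^aY^b$, and $\ab f=\sum c_{ab}x^ay^b$. So $\tr f(A,B)=\sum c_{ab}\tr(A^aB^b)$, and it suffices to show $\tr(A^aB^b)$ is a ``positive combination'' of point evaluations. Diagonalize $A=\sum_i \alpha_i u_iu_i^T$ and $B=\sum_j \beta_j v_jv_j^T$ in orthonormal bases. Then
\[
\tr(A^aB^b)=\sum_{i,j}\alpha_i^a\beta_j^b\,(u_i^Tv_j)^2 ,
\]
hence
\[
\tr f(A,B)=\sum_{i,j}(u_i^Tv_j)^2\,\ab f(\alpha_i,\beta_j)\ge0 .
\]
This holds for all $n$, which proves the first equivalence.

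\emph{Sums of squares, easy direction.} Suppose $\ab f=\sum_k h_k^2$ with $h_k\in\R[x,y]$. Lift each monomial $x^ay^b$ of $h_k$ to $X^aY^b$, obtaining $H_k$. In $H_k^*H_k$ the cross terms are $Y^{b'}X^{a'}X^aY^b$, which is cyclically equivalent to $X^{a+a'}Y^{b+b'}$. Hence $\sum_k H_k^*H_k\cyc \sum c_{ab}X^aY^b\cyc f$, so $f\in\Th$.

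\emph{Sums of squares, hard direction.} Suppose $f\cyc\sum_k g_k^*g_k$ with arbitrary $g_k$. The obstacle is that the $g_k$ may contain mixed words like $XYX$, whose squares produce words that are not cyclically sorted. My plan is to pass to a Gram matrix. Write $f\cyc W^*GW$, with $G\succeq0$ indexed by words $W$ of degree at most $d$, and apply the abelianization map.

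Abelianization is compatible with cyclic equivalence, since the collapse of a commutator is $0$. Thus $\ab f=\ab W^{T}G\,\ab W$, where $\ab W$ is the vector of commutative collapses of the words. This is the commutative Gram form of a PSD matrix, so $\ab f$ is a sum of squares in $\R[x,y]$.

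The only thing to check is that collapsing the involution and the product is consistent: the collapse of $u^*v$ equals $\ab u\,\ab v$, which holds because reversal does not change the commutative image. So this direction is actually easy. Moreover, this argument does not even need sortedness: a trace-SOS always has an SOS collapse. Sortedness is used only in the converse directions, namely the lifting above and the point-evaluation formula for the trace.

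The step needing the most care is the trace formula for general cyclically sorted $f$. One must confirm that every word cyclically equivalent to some $X^aY^b$ has the same trace as $X^aY^b$, which follows from cyclic invariance of the trace. With that, the eigenbasis computation applies verbatim.
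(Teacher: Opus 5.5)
Your proof is correct and complete. The paper itself argues only the hypothesis-free implication ($f\in\Th$ forces $\ab f$ to be a sum of squares), using exactly your observation that abelianization is a ring homomorphism which kills commutators and satisfies $\ab{(g^*g)}=(\ab g)^2$; it defers the two sortedness-dependent implications to Burgdorf's thesis, and your spectral identity $\tr f(A,B)=\sum_{i,j}(u_i^\top v_j)^2\,\ab f(\alpha_i,\beta_j)$ together with your monomial lifting $x^ay^b\mapsto X^aY^b$ (via $Y^{b'}X^{a'+a}Y^b\cyc X^{a+a'}Y^{b+b'}$) are the standard arguments for those implications, so you supply a self-contained proof where the paper relies on the citation.
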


The first equivalence is also part of~\cite[Cor.\ 5.4]{KSV}. We stress that
the second is not: the fourth clause of that corollary places a cyclically
sorted trace-positive $f$ in the cone $\Th^{\mathrm{rat}}$ of sums of
Hermitian squares and commutators of noncommutative \emph{rational} functions
with denominators $1+x_i^2$, and $\Th^{\mathrm{rat}}$ is strictly
larger than $\Th$
-- it contains $M_{nc}$ \cite[Ex.\ 3.2]{KSV}, which is the reason for
introducing it.

One half of the second equivalence requires no hypothesis: abelianization is a ring homomorphism which annihilates commutators and sends
$g^*g$ to $(\ab g)^2$, so $f\in\Th$ forces $\ab f$ to be a sum of squares.

Consequently~\cite[Rem.\ 3.6]{Bur-thesis}: for \emph{any} positive polynomial
$p\in\R[x,y]$ that is not a sum of squares, replacing each monomial $x^ay^b$
by $X^aY^b$ gives a cyclically sorted $f\in\R\langle X,Y\rangle$ with
$\ab f=p$, and Proposition~\ref{prop:sorted} then makes $f$ trace-positive but
outside $\Th$. 
Applying this to the Motzkin polynomial yields ~\eqref{eq:ncmotzkin}, so
that $M_{nc}\notin\Th$ is a restatement of the classical fact that the Motzkin
polynomial is non-negative but not a sum of squares.

This suggests isolating the part of the problem where the construction cannot
operate. If $\ab f\equiv 0$ then there is no commutative polynomial to substitute back,
and $f$ contributes nothing on commuting pairs; so if such an $f$ were
trace-positive but outside $\Th$, the obstruction would be purely
noncommutative. We show that no such obstruction occurs for symmetric homogeneous bivariate
sextics; without the symmetry hypothesis one does occur, see
Remark~\ref{rem:needsym}.

\begin{remark}\label{rem:origin}
Every bivariate degree-six obstruction we have found in the literature arises
by the lifting construction just described. We do not know whether one of a
different origin exists, and we do not claim the list is exhaustive; the
observation is offered as motivation for the hypothesis
$\ab f\equiv0$, not as a theorem.
\end{remark}

\subsection{Result}

Let
\begin{equation}\label{eq:Z}
  Z \;=\; \{ f\in\SymA\R\langle X,Y\rangle : f \text{ homogeneous of degree }6,\
  \ab f \equiv 0 \} \big/ \!\cyc ,
\end{equation}
a real vector space of dimension six (Lemma~\ref{lem:dimZ}). Cyclic
equivalence does not preserve symmetry, but nothing is lost by quotienting the
symmetric part: if $f\cyc f^*$ then $\tfrac12(f+f^*)$ is symmetric and
cyclically equivalent to $f$, so $Z$ is also the $+1$ eigenspace of the
involution on the degree-six homogeneous polynomials with vanishing
abelianization modulo $\cyc$, which is the description used in
\S\ref{sec:words}. The symmetry
requirement $f^*=f$ is essential; Remark~\ref{rem:needsym} exhibits a
trace-zero element of the larger quotient that lies outside $\Th$.

\begin{theorem}\label{thm:main}
Let $f\in Z$. The following are equivalent.
\begin{enumerate}
  \item[(i)] $f$ is trace-positive;
  \item[(ii)] $\tr f(A,B)\ge0$ for all $A,B\in\Sym_2(\R)$;
  \item[(iii)] $f\in\Th_3$;
  \item[(iv)] $f\cyc\sum_{i=1}^{4}g_i^*g_i$ with each $g_i$ homogeneous of
        degree three and $\ab{g_i}\equiv0$.
\end{enumerate}
Moreover the common cone is described in closed form by
\eqref{eq:42cone} for each of the two extreme content sectors together with
the rectangle \eqref{eq:rect} for the coupling between them; in coordinates
this is the six inequalities of Corollary~\ref{cor:explicit}.
\end{theorem}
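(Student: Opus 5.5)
The plan is to prove the cycle (iv)$\Rightarrow$(iii)$\Rightarrow$(i)$\Rightarrow$(ii)$\Rightarrow$(iv). The first three implications are immediate. (iv)$\Rightarrow$(iii) holds because the $g_i$ are cubics. (iii)$\Rightarrow$(i) is the inclusion \eqref{eq:inclusion}. (i)$\Rightarrow$(ii) is the case $n=2$. All the work is in (ii)$\Rightarrow$(iv), and the closed-form description of the cone comes out of that step.

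\emph{Coordinates.} I will use the count behind Lemma~\ref{lem:dimZ}. In contents $(6,0),(5,1),(1,5),(0,6)$ there is a single cyclic class, so vanishing abelianization kills these parts. Content $(4,2)$ has the three reversal-invariant classes $X^4Y^2$, $X^3YXY$, $X^2YX^2Y$. Imposing one linear relation leaves a 2-dimensional space, and the same holds for $(2,4)$. Content $(3,3)$ has the classes $X^3Y^3$, $XYXYXY$ and the reversal pair $\{X^2YXY^2, X^2Y^2XY\}$. The symmetric part is therefore 3-dimensional, and imposing the relation leaves 2 dimensions.

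\emph{The Gram matrix.} On the Gram side, the cubics with $\ab g=0$ form a 4-dimensional space $V=V_{(2,1)}\oplus V_{(1,2)}$, where each summand is 2-dimensional. For example, $V_{(2,1)}$ is spanned by $X^2Y-XYX$ and $XYX-YX^2$. A Gram matrix $G=\begin{pmatrix}P&C\\C^T&Q\end{pmatrix}$ on $V$ produces $f$ whose $(4,2)$ part depends only on $P$, whose $(2,4)$ part depends only on $Q$, and whose $(3,3)$ part depends only on $C$. I will check that the linear map $P\mapsto f_{(4,2)}$ is injective on the cyclic quotient, so that $P$ is fixed by $f_{(4,2)}$, and similarly $Q$ by $f_{(2,4)}$. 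I will also check that $C\mapsto f_{(3,3)}$ is a surjection from $\R^{2\times2}$ onto $\R^2$ with a 2-dimensional kernel. Then $f$ satisfies (iv) if and only if $P\succeq0$, $Q\succeq0$, and some $C$ in a 2-parameter affine family makes $G\succeq0$.

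\emph{Computing the cone.} First, $P\succeq0$ translated into the two $(4,2)$ coordinates should give the cone \eqref{eq:42cone}. After a suitable normalization of the basis I expect two linear facets, coming from the two eigen-directions of $P$. Second, write $G\succeq0$ as $C=P^{1/2}KQ^{1/2}$ with $\|K\|\le1$. Diagonalizing $P$ and $Q$, the affine constraint on $C$ fixes two linear functionals of $K$. I expect these to reduce to diagonal-type entries weighted by $\sqrt{p_+q_+}$ and $\sqrt{p_-q_-}$, and the contraction condition then gives the rectangle \eqref{eq:rect}. This eigenbasis alignment is the part I must verify. It may require a symmetry of the basis, for example the reversal or a swap of the two eigenvectors, showing that the kernel directions of $C$ are exactly the off-diagonal ones. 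Expanding everything in coordinates yields Corollary~\ref{cor:explicit}, and a rank count gives at most four squares.

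\emph{Necessity and the main obstacle.} It remains to show that every one of the six facet inequalities is forced by (ii). For each facet I will exhibit a family of pairs $(A,B)\in\Sym_2(\R)^2$ on which $\tr f(A,B)$ equals a positive multiple of that facet functional. I will look among projections $A$ and $B$ at a variable angle, and among pairs such as $A=\mathrm{diag}(1,-1)$ with $B$ a rotated copy. Homogeneity lets me scale $A$ and $B$ separately, so for the rectangle facets I will choose scalings $A\mapsto sA$, $B\mapsto tB$ that reproduce the geometric means $\sqrt{p_\pm q_\pm}$. This is the AM--GM step: minimizing $s^4t^2\alpha+s^2t^4\beta+s^3t^3\gamma$ over $s,t$ gives exactly the $\sqrt{\alpha\beta}$ bound. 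I expect this matching to be the main obstacle: finding witness families whose traces isolate each facet exactly, rather than a mixture of facets.
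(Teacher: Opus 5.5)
\textbf{Overall structure.} Your architecture matches the paper's. Necessity of each facet comes from explicit pairs in $\Sym_2(\R)$ together with your scaling step, which is Lemma~\ref{lem:decouple}. Sufficiency comes from one explicit positive semidefinite $4\times4$ Gram matrix on the cubics with vanishing abelianization. Closing the cycle directly through (ii)$\Rightarrow$(iv) is legitimate, and it even makes the K-reduction lemma unnecessary.

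\textbf{The false injectivity claim.} Your Gram-side computation rests on the claim that $P\mapsto f_{(4,2)}$ is injective, and this is false. By your own count the map goes from the three-dimensional space of symmetric $2\times2$ matrices into a two-dimensional sector, so it cannot be injective. Concretely, $k_1^*k_1\cyc k_2^*k_2\cyc c_1-2c_2+c_3$ (Lemma~\ref{lem:42alg}). Hence $f_{42}$ sees only $\tr P$ and $P_{12}$, and $P=\sigma_3$ produces $0$. So $f_{42}$ fixes $P$ only up to $\delta\sigma_3$, and $f_{24}$ fixes $Q$ only up to $\delta'\sigma_3$. Clause (iv) holds if and only if \emph{some} admissible $(\delta,\delta')$ and some $C$ in your affine family give $G\succeq0$.

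\textbf{Why this matters.} This freedom is exactly where your two ``expected'' outcomes come from.
\begin{itemize}
\item \emph{The linear facets.} For a fixed $P=\tfrac s2I+m\sigma_1+\delta\sigma_3$, the condition $P\succeq0$ is the round cone $s/2\ge\sqrt{m^2+\delta^2}$, which has no linear facets. The facets of \eqref{eq:42cone} appear only after $\delta$ is eliminated (optimally at $\delta=0$). This gives $|m|\le s/2$, i.e.\ $-4s\le u\le0$, since $m=(u+2s)/4$.
\item \emph{The eigenbasis alignment.} The alignment you flag is not a property of ``the'' $P$ and $Q$ to be verified; it is a choice to be made. Only for $\delta=\delta'=0$ do $P,Q$ lie in $\operatorname{span}\{I,\sigma_1\}$ and so are diagonal in the $\sigma_1$-eigenbasis. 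In that basis $\alpha+\beta$ and $\alpha-\beta$ are the two diagonal entries of $R$, and the off-diagonal entries span the kernel. Then $R=P^{1/2}KQ^{1/2}$ with $\|K\|\le1$ gives the rectangle \eqref{eq:rect}, as you predict.
\item \emph{Other choices.} For other $(\delta,\delta')$ the support function $\|Q^{1/2}MP^{1/2}\|_*$ does not split, and the attainable set is in general strictly smaller (Proposition~\ref{prop:ridge}, Corollary~\ref{cor:union}).
\end{itemize}
The repair is to take $P=\tfrac s2I+m\sigma_1$ and $Q=\tfrac S2I+n\sigma_1$ deliberately in the sufficiency direction. One certificate suffices and necessity comes from the trace side, so no union argument is then needed.

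\textbf{The witness side.} This part is not just unfinished: the candidates you name reach only the ``minus'' facets.
\begin{itemize}
\item Rank-one projections at angle $\theta$ give $T_{42}=s\cos^2\theta\sin^2\theta$, i.e.\ only $s\ge0$.
\item The pair $A=\sigma_3$, $B=\cos\theta\,\sigma_3+\sin\theta\,\sigma_1$ gives $T_{42}=-4u\sin^2\theta$, i.e.\ $u\le0$. With independent scalings and $\theta\to0$ it becomes the paper's family B, which handles $|\alpha-\beta|\le\sqrt{p_-q_-}$.
\end{itemize}
Together these two families admit, for example, $(s,u)=(1,-10)$, which lies outside \eqref{eq:42cone}.

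The facets $4s+u\ge0$, $4S+U\ge0$ and $|\alpha+\beta|\le\sqrt{p_+q_+}$ require nearly scalar pairs. The paper obtains them only as limits, not as exact multiples of a facet functional.
\begin{itemize}
\item It uses $A=I+\varepsilon M$, where $T_{42}=\varepsilon^2(\pi_M-\kappa_M)(4s+u)+O(\varepsilon^3)$ with $\pi_M-\kappa_M=\tfrac12\|[M,B]\|_F^2$.
\item It uses family A, $A=-(aI+\varepsilon\sigma_3)$, $B=cI+\varepsilon\sigma_1$. There the $p_-,q_-$ terms are suppressed by a factor $\varepsilon^2$ relative to $a^2p_++c^2q_+-2ac(\alpha+\beta)$.
\end{itemize}
Only once such a family is in hand does your scaling step produce the geometric mean $\sqrt{p_+q_+}$.
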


By Corollary~\ref{cor:gram} one may equally write $f\in\Th$ in clause (iii):
on $Z$ the two cones $\Th$ and $\Th_3$ agree.

The four clauses are in deliberate parallel with~\cite[Thm.\ 3.1]{BK-CR}: the
degree-four theorem has exactly this shape, including the reduction to
$2\times2$ matrices and the bound of four squares. Burgdorf and Klep present
that theorem as the tracial analogue of Hilbert's classical result on
non-negative ternary quartics, i.e.\ bivariate polynomials of degree four; in
the same spirit Theorem~\ref{thm:main} is a tracial statement about bivariate
sextics, the first degree at which, in two variables, non-negativity and sums
of squares part company.
Theorem~\ref{thm:main} says that the same four-clause description persists at
degree six, the first degree at which the general statement fails, once the
abelianization is assumed to vanish.

On the 
$\Th$ side of the equivalence the problem becomes a $4\times4$ positive semidefinite Gram matrix
$H=\left(\begin{smallmatrix}P&R\\ R^\top&Q\end{smallmatrix}\right)$; the two
diagonal blocks are pinned down by the extreme content sectors up to a
one-parameter degree of freedom each, and the set of off-diagonal blocks compatible with
them turns out to be a rectangle whose thresholds are the geometric means
$\sqrt{p_\pm q_\pm}$ of the eigenvalues of $P$ and $Q$. That the thresholds
are geometric rather than arithmetic means is what forces the certificates on
the trace side to scale $A$ and $B$ independently; see Remark~\ref{rem:geometric}.

In coordinates, the cone is cut out by six inequalities. Write $(s,u)$ for the
content-$(4,2)$ part of $f$, $(S,U)$ for the content-$(2,4)$ part and
$(\alpha,\beta)$ for the content-$(3,3)$ part, in the parametrizations fixed
in Sections~\ref{sec:42} and~\ref{sec:33}. Then $f$ is trace-positive -- and
therefore lies in $\Th_3$ -- if and only if
\[
  4s+u\ \ge\ 0,\qquad u\ \le\ 0,\qquad 4S+U\ \ge\ 0,\qquad U\ \le\ 0,
\]
\[
  16(\alpha+\beta)^2\ \le\ (4s+u)(4S+U),
  \qquad
  16(\alpha-\beta)^2\ \le\ uU .
\]
This is Corollary~\ref{cor:explicit}.

\subsection{What this does and does not settle}\label{sec:settle}

Theorem~\ref{thm:main} does not determine the least degree $d_{\min}$ at which a
symmetric homogeneous trace-positive element can fail to lie in $\Th$ while
having sum-of-squares abelianization; \cite{BK-CR} shows that degree is greater than
four, and $S_{6,3}(X^2,Y^2)\notin\Th$ \cite[Ex.\ 3.5]{KS-BMV} shows it is at
most twelve. What the theorem closes is the route on which the abelianization
vanishes outright, which by Remark~\ref{rem:origin} is the source of every
bivariate degree-six obstruction we know of.

\subsection{Outline of the proof}\label{sec:outline}

The proof matches two descriptions of the same cone, one by Hermitian squares
and one by traces, one content sector at a time.

\emph{Step 1: the $\Th$ side is a $4\times4$ Gram matrix}
(Section~\ref{sec:kred}). Degree and content bookkeeping reduces every
certificate $f\cyc\sum_i g_i^*g_i$ to one in which each $g_i$ lies in the
four-dimensional space $K$ of cubics with vanishing abelianization
(Lemma~\ref{lem:kred}). Hence $\Th\cap Z$ is the image of the positive
semidefinite $4\times4$ matrices $H=\left(\begin{smallmatrix}P&R\\R^\top&Q
\end{smallmatrix}\right)$, where $P$ produces the content-$(4,2)$ part of $f$,
$Q$ the $(2,4)$ part and $R$ the $(3,3)$ part; and $H\succeq0$ if and only if
$P,Q\succeq0$ and $R=P^{1/2}CQ^{1/2}$ with $\|C\|\le1$
(Corollary~\ref{cor:gram}).

\emph{Step 2: the trace side splits the same way}
(Section~\ref{sec:sectors}). Rescaling $X\mapsto\lambda X$, $Y\mapsto\mu Y$
shows that $f$ is trace-positive if and only if $T_{42}\ge0$, $T_{24}\ge0$ and
$|T_{33}|\le2\sqrt{T_{42}T_{24}}$ (Lemma~\ref{lem:decouple}) --- the same shape
as the block condition of Step~1. It remains to match the pieces.

\emph{Step 3: the extreme sectors} (Section~\ref{sec:42}). The content-$(4,2)$
part of $Z$ is two-dimensional. On the $\Th$ side, the parts arising from some
$P\succeq0$ form a cone with two linear facets, \eqref{eq:42cone}; on the trace
side, one explicit pair of $2\times2$ matrices per facet shows that
trace-positivity forces the same cone (Proposition~\ref{prop:42}). The sector
$(2,4)$ follows by exchanging $X$ and $Y$.

\emph{Step 4: the middle sector} (Section~\ref{sec:33}). The content-$(3,3)$
part is two-dimensional, with coordinates $(\alpha,\beta)$. For the $P$ and $Q$
of Step~3, the attainable $(\alpha,\beta)$ form a rectangle whose half-sides
are the geometric means $\sqrt{p_+q_+}$, $\sqrt{p_-q_-}$ of the eigenvalues of
$P$ and $Q$ (Lemma~\ref{lem:rectangle}). Conversely, two 
families of $2\times2$ pairs give negative trace at every
$(\alpha,\beta)$ outside the rectangle (Proposition~\ref{prop:33}).

\emph{Step 5: assembly} (Section~\ref{sec:proof}). Steps~3 and~4 match the two
sides exactly, which proves Theorem~\ref{thm:main}. Every witness is a pair of
$2\times2$ matrices, which gives clause~(ii), and $H$ is $4\times4$, which gives
the bound of four squares in clause~(iv).

Section~\ref{sec:prelim} fixes notation. Section~\ref{sec:union} is not used
in this argument: it verifies directly on
the $\Th$ side that no other choice of the Gram matrix enlarges the rectangle
(Remark~\ref{rem:notneeded}). Section~\ref{sec:open} lists open questions.

\section{Preliminaries}\label{sec:prelim}

\subsection{Words, cyclic classes, involution}\label{sec:words}

Let $\langle X,Y\rangle$ be the set of words. For a word $w$ let $w^*$ be its
reversal. Since $\tr(w(A,B))=\tr(w^*(A,B))$ for symmetric $A,B$, the trace of
$f$ depends only on the coefficients of $f$ summed over the classes of the
equivalence generated by cyclic rotation \emph{and} reversal; these are the
\emph{bracelet classes}, and we write $\langle w\rangle$ for the class of $w$.
Throughout, a symbol such as $c_1$ or $d_1$ denotes a bracelet class, and the
coefficient attached to it is the total coefficient mass of $f$ on that class;
the trace of the class at a pair $(A,B)$ is the trace of any one of its words.

Two quotients are in play and must not be confused. Cyclic equivalence
identifies $f$ and $g$ exactly when their coefficient sums agree on every
\emph{cyclic} class, so the homogeneous polynomials of degree six modulo
$\cyc$ form a space whose dimension is the number of cyclic classes, namely
fourteen. The involution descends to that space, fixing twelve of the classes
and interchanging the classes of $X^2YXY^2$ and $X^2Y^2XY$. Its $+1$
eigenspace -- equivalently the image of the symmetric polynomials
$\SymA\R\langle X,Y\rangle=\{f:f^*=f\}$ -- therefore has dimension thirteen,
and on it the coefficient sums of those two classes coincide, so that
\emph{bracelet classes are exactly the coordinates on the symmetric part}.
The $-1$ eigenspace is the line spanned by the difference of those two
classes, and the trace annihilates it. Everything below takes place on the
symmetric part; see Remark~\ref{rem:needsym} for why the restriction is
necessary and not merely convenient.

The \emph{content} of a word is the pair $(\deg_X w,\deg_Y w)$. Content is
additive under concatenation and invariant under rotation and reversal; in
particular it is defined on bracelet classes, and
$\mathrm{content}(u^*v)=\mathrm{content}(u)+\mathrm{content}(v)$ for all words
$u,v$.

Throughout, $\sigma_1=\begin{pmatrix}0&1\\1&0\end{pmatrix}$ and
$\sigma_3=\begin{pmatrix}1&0\\0&-1\end{pmatrix}$ denote the two real
symmetric Pauli matrices; they satisfy $\sigma_1^2=\sigma_3^2=I$ and
$\sigma_1\sigma_3=-\sigma_3\sigma_1$.

\begin{lemma}\label{lem:content-cyclic}
For each content $q$, the sum of the coefficients of the words of content $q$
is a cyclic invariant: if $f\cyc g$ then $f$ and $g$ have the same content
sums. Moreover the coefficient of the monomial $x^ay^b$ in $\ab f$ is the
content sum of $f$ at $q=(a,b)$.
\end{lemma}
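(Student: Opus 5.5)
The plan is to reduce both assertions to the behaviour of a single commutator and then use linearity. Cyclic equivalence $f\cyc g$ means that $f-g$ is a finite linear combination of commutators $[p,q]=pq-qp$. By bilinearity of the commutator in $p$ and $q$, every such combination is a linear combination of commutators $[u,v]=uv-vu$ of words $u,v$. For a fixed content $q$, the content sum $\Sigma_q(h)$ is the sum of the coefficients of $h$ on words of content $q$, and it is a linear functional on $\R\langle X,Y\rangle$. So the first claim reduces to checking that $\Sigma_q([u,v])=0$ for all words $u,v$ and every content $q$.

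For this check, note that $uv$ and $vu$ are words that are cyclic rotations of each other. Both have content $\mathrm{content}(u)+\mathrm{content}(v)$, since content is additive under concatenation. If $uv=vu$ as words, the commutator is zero. Otherwise $[u,v]$ has coefficient $+1$ on one word and $-1$ on another word of the same content. In either case $\Sigma_q([u,v])=0$ for every $q$. Linearity then gives $\Sigma_q(f)=\Sigma_q(g)$ whenever $f\cyc g$. An alternative phrasing, consistent with the paper's description of $\cyc$ by cyclic-class coefficient sums, is this: content is constant on cyclic classes, so $\Sigma_q$ is the sum, over the cyclic classes of content $q$, of the class coefficient sums, and these are cyclic invariants by definition.

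For the second claim, use that abelianization is the linear map sending each word $w$ to the commutative monomial $x^{\deg_X w}y^{\deg_Y w}$. The preimage of the monomial $x^ay^b$ is exactly the set of words of content $(a,b)$. Collecting terms of $\ab f=\sum_w f_w\,x^{\deg_X w}y^{\deg_Y w}$ by monomial therefore gives coefficient $\sum_{\mathrm{content}(w)=(a,b)} f_w=\Sigma_{(a,b)}(f)$.

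There is no real obstacle here. The only point needing care is the reduction from commutators of arbitrary polynomials to commutators of words, which is immediate from bilinearity. As a consequence, $\ab f\equiv0$ is a condition on the class of $f$ modulo $\cyc$, so the space $Z$ in \eqref{eq:Z} is well defined.
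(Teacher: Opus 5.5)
Your proposal is correct and follows the paper's own argument: $uv$ and $vu$ have the same content and opposite coefficients, so every commutator has zero content sums, and the second claim is just the definition of $\ab f$. You add only the explicit reduction from commutators of polynomials to commutators of words by bilinearity, and the trivial case $uv=vu$, both of which the paper leaves implicit.
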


\begin{proof}
A commutator $uv-vu$ has zero coefficient sum within each content, since $uv$
and $vu$ have the same content and opposite coefficients. The second statement
is the definition of the abelianization.
\end{proof}

Lemma~\ref{lem:content-cyclic} is what allows the abelianization to be read off
a Gram matrix even though $f$ is only cyclically equivalent to a sum of
squares; it is used without further comment in Section~\ref{sec:kred}.

\subsection{The space $Z$}

\begin{lemma}\label{lem:dimZ}
$\dim Z = 6$, with two dimensions in each of the contents $(4,2)$, $(3,3)$ and
$(2,4)$, and none in $(6,0)$, $(5,1)$, $(1,5)$, $(0,6)$.
\end{lemma}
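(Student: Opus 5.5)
We need to count bracelet classes of degree 6 by content. Then Z is the subspace of the 13-dimensional symmetric quotient on which each content sum vanishes, which imposes one linear condition for each content that actually occurs.

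The plan is direct counting. By the discussion in \S\ref{sec:words}, the symmetric part of the degree-six homogeneous polynomials modulo $\cyc$ has the bracelet classes as coordinates, and its dimension is thirteen. By Lemma~\ref{lem:content-cyclic}, the condition $\ab f\equiv0$ says exactly that, for each content $q=(a,b)$ with $a+b=6$, the sum of the coordinates over the bracelet classes of content $q$ vanishes. These conditions involve disjoint sets of coordinates. So within each content, the contribution to $Z$ has dimension (number of bracelet classes of content $q$) minus one, or zero if there are no such classes. The whole proof therefore reduces to listing the bracelet classes by content.

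The enumeration goes as follows.
\begin{itemize}
\item Contents $(6,0)$ and $(0,6)$: one class each, $X^6$ and $Y^6$.
\item Contents $(5,1)$ and $(1,5)$: one class each, $X^5Y$ and $XY^5$, since all placements of a single letter are rotations of one another.
\end{itemize}
Each of these contents therefore contributes $1-1=0$ dimensions.

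Content $(4,2)$: place the two $Y$'s on a cycle of length six. Up to rotation and reflection, the classes are determined by the cyclic gap between the $Y$'s, which is $0$, $1$ or $2$ (the gaps sum to $4$, so the partitions are $\{0,4\},\{1,3\},\{2,2\}$). This gives three classes: $X^4Y^2$, $X^3YXY$ and $X^2YX^2Y$, so the content contributes $3-1=2$. Exchanging $X$ and $Y$ gives the same count for $(2,4)$.

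Content $(3,3)$: the cyclic necklaces with three $X$'s and three $Y$'s number $\binom{6}{3}/6$ rounded via Burnside, namely $(20+2)/6\cdot\ldots$. It is cleaner to list them directly:
\begin{itemize}
\item $X^3Y^3$,
\item $X^2Y^2XY$,
\item $X^2YXY^2$,
\item $X^2YY XY$ patterns, reduced,
\item $XYXYXY$.
\end{itemize}
Concretely, the four cyclic classes are $X^3Y^3$, $X^2YXY^2$, $X^2Y^2XY$ and $XYXYXY$. Reversal interchanges the two middle ones, as noted in \S\ref{sec:words}, which leaves three bracelet classes. The content therefore contributes $3-1=2$.

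The total is $0+0+2+2+2+0+0=6$. As a consistency check, $1+1+3+3+3+1+1=13$ matches the stated dimension of the symmetric part.

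The only step that needs care is the completeness of the $(3,3)$ list. For this I would use Burnside's lemma for the cyclic group $C_6$ acting on the $20$ words. The identity fixes $20$ words, rotation by $3$ fixes $2$ ($XYXYXY$ rotated by three is not fixed, but $XYX\,YXY$ is), and the other rotations fix none. This gives $(20+2)/6\ldots$; since the arithmetic here is fiddly, I would instead double-check by enumerating the positions of the $Y$'s as $3$-subsets of $\mathbb{Z}/6$ up to rotation. There are $20/6$ orbits with stabilizer adjustments, giving four cyclic classes, namely the orbit sizes $6,6,6,2$.
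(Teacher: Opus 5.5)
Your argument is correct and is the same as the paper's: you count bracelet classes by content as $1,1,3,3,3,1,1$, note that $\ab f\equiv0$ imposes one non-trivial content-sum condition per content, and conclude $13-7=6$. The one thing to fix is the Burnside aside, which is wrong as written and should be corrected or deleted. Rotation by $3$ fixes no word of content $(3,3)$, since such a word would contain an even number of $X$'s. Rotations by $2$ and $4$ each fix $XYXYXY$ and $YXYXYX$, giving $(20+0+2+0+2+0)/6=4$. Your closing orbit-size count $6+6+6+2=20$ already shows the list of four cyclic classes is complete.
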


\begin{proof}
By \S\ref{sec:words} the symmetric part of the degree-six homogeneous
polynomials modulo $\cyc$ has the bracelet classes as coordinates. These
number thirteen, distributed by content as $1,1,3,3,3,1,1$ for
$(6,0),\dots,(0,6)$. (Under rotation alone the content $(3,3)$ carries four
classes; the classes of $X^2YXY^2$ and $X^2Y^2XY$ are exchanged by reversal,
so they merge on the symmetric part, and the fourteenth cyclic dimension is
the antisymmetric line of Remark~\ref{rem:needsym}.) Vanishing of the
abelianization imposes one linear condition per content by
Lemma~\ref{lem:content-cyclic}, and each condition is non-trivial, leaving
$13-7=6$.
\end{proof}

\begin{remark}\label{rem:needsym}
The hypothesis $f^*=f$ in \eqref{eq:Z} cannot be dropped. Put
\[
  h \;=\; X^2YXY^2 - X^2Y^2XY .
\]
Then $h$ is homogeneous of degree six and $\ab h\equiv0$, both words
collapsing to $x^3y^3$. Since $\tr w(A,B)=\tr w^*(A,B)$ for symmetric $A,B$
and the two words are reversals of one another up to rotation,
$\tr h(A,B)=0$ identically; in particular $h$ is trace-positive. Yet
$h\notin\Th$. Indeed cyclic equivalence commutes with the involution, because
$(uv-vu)^*=-(u^*v^*-v^*u^*)$, and $\sum_i g_i^*g_i$ is symmetric; so
$h\cyc\sum_i g_i^*g_i$ would give $h^*\cyc h$. But reversal interchanges the
two cyclic classes occurring in $h$, so $h^*\cyc-h$, whence $2h\cyc0$ --
impossible, since $X^2YXY^2$ and $X^2Y^2XY$ are distinct cyclic classes.
Thus in the quotient of \emph{all} degree-six polynomials with vanishing
abelianization, which has dimension $14-7=7$, trace-positivity does not imply
membership in $\Th$: the extra direction is invisible to the trace and lies
outside $\Th$ for reasons of symmetry alone. Restricting to
$\SymA\R\langle X,Y\rangle$ removes exactly that direction, and is also the
standard setting in which trace-positivity is compared with Hermitian-square
certificates.
\end{remark}

\section{Reduction to a four-dimensional Gram matrix}\label{sec:kred}

\begin{definition}  \label{def:K}
Let
\[
  k_1 = X^2Y - XYX,\quad k_2 = XYX - YX^2,\quad
  k_3 = XY^2 - YXY,\quad k_4 = YXY - Y^2X ,
\]
and let $K=\operatorname{span}\{k_1,k_2,k_3,k_4\}$.
\end{definition}

Each $k_i$ has vanishing abelianization, and $k_1,k_2$ have content $(2,1)$
while $k_3,k_4$ have content $(1,2)$. In fact $K$ is intrinsic: the homogeneous
cubics form an eight-dimensional space, abelianization maps it onto the
four-dimensional span of $x^3,x^2y,xy^2,y^3$, so its kernel has dimension four,
and the four linearly independent elements $k_i$ lie in that kernel. Hence
\begin{equation}\label{eq:Kintrinsic}
  K=\{g\in\R\langle X,Y\rangle:\ g\ \text{homogeneous of degree }3,\
  \ab g\equiv0\},
\end{equation}
so that $K$ is characterized intrinsically, not by the particular basis
$k_1,\dots,k_4$; this is what allows clause~(iv) of Theorem~\ref{thm:main} to
be stated as a condition on the $g_i$ themselves.

\begin{lemma}[K-reduction]\label{lem:kred}
Let $f\in Z\cap\Th$. Then $f\cyc\sum_i g_i^*g_i$ with every $g_i\in K$.
\end{lemma}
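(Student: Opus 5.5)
The plan is to normalize an arbitrary certificate $f\cyc\sum_i g_i^*g_i$ in two steps. First, each $g_i$ can be taken homogeneous of degree three, by comparing extreme homogeneous components. Second, the abelianization of each $g_i$ must vanish, by applying abelianization to the certificate. The characterization \eqref{eq:Kintrinsic} then places each $g_i$ in $K$.

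\emph{Step 1 (homogeneity).} Write $g_i=\sum_k g_i^{(k)}$ with $g_i^{(k)}$ homogeneous of degree $k$. Let $d=\max_i\deg g_i$ and suppose $d>3$. The degree-$2d$ homogeneous component of $\sum_i g_i^*g_i$ is $\sum_i g_i^{(d)*}g_i^{(d)}$. Cyclic equivalence respects the grading by degree, since commutators of words are homogeneous. The degree-$2d$ component of $f$ is $0$ because $2d>6$, so $\sum_i g_i^{(d)*}g_i^{(d)}\cyc0$. Taking traces at arbitrary $A,B\in\Sym_n(\R)$ gives
\[
  \sum_i \tr\big(g_i^{(d)}(A,B)^{\top}g_i^{(d)}(A,B)\big)=0,
\]
so every $g_i^{(d)}$ vanishes on all pairs of symmetric matrices of all sizes. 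Hence $g_i^{(d)}=0$, because a nonzero element of $\R\langle X,Y\rangle$ cannot vanish on symmetric matrices of every size; this is the standard absence of polynomial identities, e.g.\ via generic symmetric matrices. This contradicts the definition of $d$, so $d\le3$.

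Dually, let $e$ be the least degree of a nonzero homogeneous component occurring among the $g_i$, and suppose $e<3$. The degree-$2e$ component of $\sum_i g_i^*g_i$ is $\sum_i g_i^{(e)*}g_i^{(e)}$, and it must be cyclically equivalent to zero since $2e<6$. The same trace argument kills all $g_i^{(e)}$, a contradiction. Hence every $g_i$ is homogeneous of degree three.

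\emph{Step 2 (abelianization).} Abelianization is a ring homomorphism, kills commutators, and sends $g^*$ to $\ab g$, because the involution only reverses words. Combining this with Lemma~\ref{lem:content-cyclic} gives
\[
  0=\ab f=\sum_i(\ab{g_i})^2\quad\text{in }\R[x,y].
\]
A sum of squares of real polynomials vanishes identically only if each term does, since it can be evaluated at real points. So $\ab{g_i}\equiv0$, and by \eqref{eq:Kintrinsic} each $g_i$ lies in $K$. No separate content bookkeeping is needed, because the contents $(3,0)$ and $(0,3)$ are automatically excluded by $\ab{g_i}=0$.

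The only step that is not pure bookkeeping is the claim in Step 1 that a sum of Hermitian squares cyclically equivalent to zero has all $g_i=0$. I would handle it by the trace argument plus the no-identities fact above. A fallback that avoids analysis is combinatorial. Let $w$ be the lexicographically largest word occurring in any $g_i^{(d)}$. The word $w^*w$ then appears in $\sum_i g_i^{(d)*}g_i^{(d)}$ with coefficient $\sum_i c_{i,w}^2>0$. One would then check that no other product $u^*v$ of words from the supports lands in the cyclic class of $w^*w$ with a cancelling sign; this uncancelled class contradicts cyclic equivalence to zero.
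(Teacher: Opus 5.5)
Your proof is correct, but it departs from the paper's in two places. The upper degree bound is the same as the paper's Step~1.

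The first difference concerns the lower-degree components. The paper never shows that they vanish. It writes $f\cyc v^*Gv$ with $G\succeq0$, compares degree-six components to get $f\cyc v_3^*G_{33}v_3$, and refactors the principal block $G_{33}$, so it replaces the certificate by a new one. You instead repeat the trace argument at the minimal degree $e$, and show that the given certificate has no components below degree three.

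The second difference concerns membership in $K$. The paper argues content by content inside the Gram matrix: content $(6,0)$ kills the row and column of $X^3$, and content $(4,2)$ then gives $\mathbf 1^\top G_W\mathbf 1=0$, hence zero coefficient sum on $W$. You abelianize the whole certificate, $0=\ab f=\sum_i(\ab{g_i})^2$, and invoke \eqref{eq:Kintrinsic}. These are the same fact in different dress: the paper's content sums are exactly the coefficients of $x^6$ and $x^4y^2$ in $\sum_i(\ab{g_i})^2$.

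Your packaging is shorter, and it reuses the introduction's observation that $f\in\Th$ forces $\ab f$ to be a sum of squares. It also yields the stronger conclusion that \emph{every} certificate of $f$ has all $g_i\in K$. The paper's version stays in the Gram-matrix language that Corollary~\ref{cor:gram} needs, and its truncation avoids a second positivity argument.

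Drop the combinatorial fallback: it is unnecessary, and as stated it fails. With $X<Y$, take $g_1=X^3-XY^2$ and $g_2=YX^2$. The lexicographically largest word is $w=YX^2$, and $w^*w=X^2Y^2X^2$. The cross terms $-X^4Y^2-Y^2X^4$ of $g_1^*g_1$ lie in the same cyclic class, so that class has total coefficient $1-2=-1$ in $g_1^*g_1+g_2^*g_2$. An off-diagonal pair can thus land in the class of $w^*w$ with a negative Gram entry, and the claimed uncancelled class need not exist. The trace argument you give first is what actually does the work.
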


\begin{proof}
\emph{Step 1: the degrees are bounded.} Suppose $f\cyc\sum_i g_i^*g_i$, put
$D=\max_i\deg g_i$, and let $G_i$ denote the degree-$D$ homogeneous component
of $g_i$. If $D>3$ then $2D>6=\deg f$. A commutator of homogeneous
elements is homogeneous of degree $\deg u+\deg v$, so the span of the
commutators is a graded subspace and each homogeneous component of a sum of
commutators is again one; comparing degree-$2D$ components of $f$ and
$\sum_i g_i^*g_i$ therefore gives $\sum_i G_i^*G_i\cyc0$. Taking traces at
any $A,B\in\Sym_n(\R)$ annihilates the commutators and leaves
$\sum_i\|G_i(A,B)\|_F^2=0$, so every $G_i$ vanishes on all symmetric pairs of
every size and hence $G_i=0$, contradicting the choice of $D$. Hence
$\deg g_i\le3$ for all $i$.

\emph{Step 2: the $g_i$ may be taken homogeneous of degree three.} Write
$f\cyc v^*Gv$ with $G\succeq0$ indexed by the fifteen words of degree at most
three.
Comparing degree-six components as in Step 1 gives $f\cyc v_3^*G_{33}v_3$,
where $v_3$ lists the eight words of degree three and $G_{33}$ is the
corresponding principal submatrix of $G$, itself positive semidefinite.
Since $G_{33}\succeq0$ and $f\cyc v_3^*G_{33}v_3$, we may work with $G_{33}$ in
place of $G$: factor $G_{33}=\sum_i c_ic_i^\top$ (for instance by Cholesky) and
set $g_i=c_i^\top v_3$; these are the $g_i$ the lemma produces, and
$f\cyc\sum_i g_i^*g_i$.

\emph{Step 3: the coefficients.} Consider the content $(6,0)$. Since
$\mathrm{content}(u^*v)=\mathrm{content}(u)+\mathrm{content}(v)$, a product
$u^*v$ with $u,v$ of degree three has content $(6,0)$ only if $u=v=X^3$, so by
Lemma~\ref{lem:content-cyclic} the content sum of $f$ at $(6,0)$ equals
$G[X^3,X^3]=\sum_i c_i[X^3]^2$. This vanishes because $\ab f\equiv0$, so
$c_i[X^3]=0$ for every $i$
and the whole row and column of $X^3$ in $G$ vanish. Symmetrically for $Y^3$.

Now consider $(4,2)$. Products $u^*v$ of that content arise from the three
splittings $(\deg_Xu,\deg_Xv)\in\{(1,3),(2,2),(3,1)\}$; the outer two force
$v=X^3$, respectively $u=X^3$, and so involve the row of $X^3$, already zero. Hence
only $u,v$ of content $(2,1)$ contribute, i.e.\ the principal $3\times3$ block
$G_W$ on $W=\{X^2Y, XYX, YX^2\}$, and the content sum of $f$ at $(4,2)$ is
$\mathbf1^\top G_W\mathbf1=\|G_W^{1/2}\mathbf1\|^2$. It vanishes, so
$G_W\mathbf1=0$; and from $G=\sum_i c_ic_i^\top$ we get
$0=\mathbf1^\top G_W\mathbf1=\sum_i\langle c_i|_W,\mathbf1\rangle^2$, so every
$g_i$ has coefficients summing to zero on $W$. The span of the vectors in
$\R^W$ orthogonal to $\mathbf1$ is exactly $\operatorname{span}\{k_1,k_2\}$.
Symmetrically the content-$(1,2)$ part of each $g_i$ lies in
$\operatorname{span}\{k_3,k_4\}$. With the $(3,0)$ and $(0,3)$ parts already
zero, this gives $g_i\in K$.
\end{proof}

\begin{corollary}\label{cor:gram}
$\Th\cap Z=\Th_3\cap Z$, and it is the image of the cone of positive
semidefinite $4\times4$ matrices under
$H\mapsto \sum_{i,j}H_{ij}\,k_i^*k_j$. Writing
\begin{equation}\label{eq:Hblocks}
  H=\begin{pmatrix} P & R\\ R^\top & Q\end{pmatrix},\qquad P,Q,R\in\R^{2\times2},
\end{equation}
the block $P$ produces the content-$(4,2)$ part of $f$, the block $Q$ the
content-$(2,4)$ part, and the off-diagonal block $R$ the content-$(3,3)$ part.
Moreover $H\succeq0$ if and only if $P,Q\succeq0$ and $R=P^{1/2}CQ^{1/2}$ for
some $C$ with $\|C\|\le1$; this is the standard factorization of positive
semidefinite block matrices due to \v{S}mul'jan~\cite{Smuljan} (see
also Albert's generalized Schur complement criterion~\cite{Albert}), and it
requires no invertibility assumption on $P$ or $Q$.
\end{corollary}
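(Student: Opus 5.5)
The plan is to derive all three assertions of Corollary~\ref{cor:gram} from Lemma~\ref{lem:kred}, using content bookkeeping for the block description and a standard positive semidefinite completion argument for the factorization.

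\emph{The equality of cones.} The inclusion $\Th_3\cap Z\subseteq\Th\cap Z$ is trivial. For the reverse, take $f\in Z\cap\Th$. Lemma~\ref{lem:kred} gives $f\cyc\sum_i g_i^*g_i$ with every $g_i\in K$, and each such $g_i$ has degree three, so $f\in\Th_3$.

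\emph{The image of the PSD cone.} Expand each $g_i=\sum_j c_{ij}k_j$ and put $H=\sum_i c_ic_i^\top\succeq0$. Then $\sum_i g_i^*g_i=\sum_{j,l}H_{jl}k_j^*k_l$, so $\Th\cap Z$ lies in the image of the PSD cone. Conversely, any $H\succeq0$ factors as $\sum_i c_ic_i^\top$ and produces a sum of Hermitian squares of elements of $K$. Such a sum is symmetric and homogeneous of degree six. Its abelianization is $\sum_i(\ab g_i)^2=0$, since abelianization is a ring homomorphism and each $\ab g_i=0$. So its class lies in $Z$ and in $\Th$. This gives equality with the image.

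\emph{The block description.} I will use the additivity $\mathrm{content}(u^*v)=\mathrm{content}(u)+\mathrm{content}(v)$. Since $k_1,k_2$ have content $(2,1)$ and $k_3,k_4$ have content $(1,2)$, the products have the following contents:
\begin{itemize}
\item $k_j^*k_l$ with $j,l\le2$ has content $(4,2)$;
\item $k_j^*k_l$ with $j,l\ge3$ has content $(2,4)$;
\item the mixed products have content $(3,3)$.
\end{itemize}
Hence $P$ produces exactly the content-$(4,2)$ part, $Q$ the content-$(2,4)$ part, and $R$ (together with $R^\top$) the content-$(3,3)$ part.

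\emph{The factorization.} One direction is a direct identity. If $R=P^{1/2}CQ^{1/2}$ with $\|C\|\le1$, then
\[
H=\begin{pmatrix}P^{1/2}&0\\0&Q^{1/2}\end{pmatrix}\begin{pmatrix}I&C\\C^\top&I\end{pmatrix}\begin{pmatrix}P^{1/2}&0\\0&Q^{1/2}\end{pmatrix},
\]
and the middle factor is PSD because $\|C\|\le1$.

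For the converse, assume $H\succeq0$. Its principal blocks $P$ and $Q$ are then PSD. From $|x^\top Ry|^2\le(x^\top Px)(y^\top Qy)$ I get $\ker P\subseteq\ker R^\top$ and $\ker Q\subseteq\ker R$. Define $C=(P^{1/2})^+R(Q^{1/2})^+$ with Moore--Penrose pseudoinverses. Using the kernel inclusions, $P^{1/2}CQ^{1/2}=R$. To bound $C$, substitute $x=(P^{1/2})^+a$ and $y=(Q^{1/2})^+b$ into the inequality above. This gives $|a^\top Cb|\le\|a\|\|b\|$ for $a\in\operatorname{ran}P$ and $b\in\operatorname{ran}Q$, and $C$ vanishes outside these ranges. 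Hence $\|C\|\le1$.

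The only delicate point is handling singular $P$ and $Q$ in this last step. The kernel inclusions together with pseudoinverses dispose of it, and I can also simply cite \v{S}mul'jan's factorization.
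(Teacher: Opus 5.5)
Your proposal is correct and follows the paper's route. Lemma~\ref{lem:kred} puts every certificate in $K$, hence in degree three, content additivity assigns $P$, $Q$, $R$ to the sectors $(4,2)$, $(2,4)$, $(3,3)$, and the block characterization is \v{S}mul'jan's factorization. Where the paper simply cites \v{S}mul'jan and Albert, you give a self-contained pseudoinverse proof: kernel inclusions from $|x^\top Ry|^2\le(x^\top Px)(y^\top Qy)$, then $C=(P^{1/2})^+R(Q^{1/2})^+$. That argument is valid and handles singular $P,Q$ correctly, and you also make explicit the easy inclusion that every $H\succeq0$ yields an element of $\Th\cap Z$.
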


The same map, taken over all symmetric $H$ rather than only the positive
semidefinite ones, is \emph{onto} $Z$ -- note that $\sum_{i,j}H_{ij}k_i^*k_j$
is symmetric whenever $H$ is, so its image lies in $Z$ to begin with. This
follows from
Lemmas~\ref{lem:42alg} and~\ref{lem:33alg} below, which exhibit the image of
each block as the full corresponding sector. Hence every $f\in Z$ has
coordinates $(s,u)$, $(S,U)$, $(\alpha,\beta)$ in the sense used throughout
-- with $m=(u+2s)/4$ and $n=(U+2S)/4$ the derived Gram parameters of
\S\ref{sec:42} -- whether or not it lies in $\Th$.

\section{Decoupling the content sectors}\label{sec:sectors}

Write $f=f_{42}+f_{33}+f_{24}$ for the decomposition of $f\in Z$ by content,
and $T_q(A,B)=\tr f_q(A,B)$.

\begin{lemma}\label{lem:decouple}
Fix $N$. Then $\tr f(A,B)\ge0$ for all $A,B\in\Sym_N(\R)$ if and only if, for
all $A,B\in\Sym_N(\R)$,
\[
  T_{42}\ge0,\qquad T_{24}\ge0,\qquad |T_{33}|\le 2\sqrt{T_{42}T_{24}} .
\]
\end{lemma}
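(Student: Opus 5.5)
The plan is to use the rescaling $X\mapsto\lambda X$, $Y\mapsto\mu Y$, which keeps $\Sym_N(\R)$ invariant, and to read the resulting quantity as a binary quadratic form. Since $f_{42}$, $f_{33}$, $f_{24}$ are homogeneous in $X$ and $Y$ of bidegrees $(4,2)$, $(3,3)$ and $(2,4)$, we have
\[
  \tr f(\lambda A,\mu B)=\lambda^4\mu^2T_{42}+\lambda^3\mu^3T_{33}+\lambda^2\mu^4T_{24}
  =\lambda^2\mu^2\bigl(\lambda^2T_{42}+\lambda\mu T_{33}+\mu^2T_{24}\bigr),
\]
with each $T_q$ evaluated at $(A,B)$. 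So for a fixed pair $(A,B)$, non-negativity of the trace along the whole two-parameter orbit $\{(\lambda A,\mu B)\}$ is equivalent to non-negativity of the binary quadratic form $q(\lambda,\mu)=T_{42}\lambda^2+T_{33}\lambda\mu+T_{24}\mu^2$ on $\R^2$. It suffices to check $\lambda\mu\neq0$, since $q$ is continuous and the prefactor $\lambda^2\mu^2$ is positive there.

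For the forward direction, assume $\tr f\ge0$ on $\Sym_N(\R)^2$ and fix $(A,B)$. Then $q(\lambda,\mu)\ge0$ whenever $\lambda\mu\ne0$, and by continuity $q\ge0$ on all of $\R^2$. A real binary quadratic form is positive semidefinite exactly when its diagonal coefficients are non-negative and its discriminant is non-positive. This gives $T_{42}\ge0$, $T_{24}\ge0$ and $T_{33}^2\le4T_{42}T_{24}$, which is the stated condition $|T_{33}|\le2\sqrt{T_{42}T_{24}}$.

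For the converse, assume the three inequalities hold for all $(A,B)$ and take $\lambda=\mu=1$. Then
\[
  \tr f(A,B)=T_{42}+T_{33}+T_{24}\ge T_{42}+T_{24}-2\sqrt{T_{42}T_{24}}=\bigl(\sqrt{T_{42}}-\sqrt{T_{24}}\bigr)^2\ge0 .
\]

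The main obstacle is only to make sure the decomposition $f=f_{42}+f_{33}+f_{24}$ is legitimate on the quotient $Z$. Lemma~\ref{lem:dimZ} shows that $Z$ has no components in the contents $(6,0)$, $(5,1)$, $(1,5)$, $(0,6)$. Moreover, each $T_q$ is well defined on cyclic classes, because a commutator is a sum of content-homogeneous commutators and these have trace zero. With that in place the argument is purely the quadratic-form fact. It also uses nothing specific to $N$, which is why the lemma is stated for each fixed $N$.
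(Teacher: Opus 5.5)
Your proof is correct and follows the paper's argument. Both rescale $(A,B)\mapsto(\lambda A,\mu B)$, reduce trace-positivity to non-negativity of $T_{42}\lambda^2+T_{33}\lambda\mu+T_{24}\mu^2$ (using continuity to pass from $\lambda\mu\neq0$ to all of $\R^2$), and then apply the criterion that a binary quadratic is non-negative exactly when both outer coefficients are non-negative and the discriminant is non-positive; the paper merely dehomogenizes to $t=\lambda/\mu$, and it calls the converse immediate where you spell it out with $\lambda=\mu=1$.
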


\begin{proof}
Substituting $(\lambda A,\mu B)$ multiplies $T_{42}$, $T_{33}$, $T_{24}$ by
$\lambda^4\mu^2$, $\lambda^3\mu^3$, $\lambda^2\mu^4$ respectively. Dividing
$\lambda^4\mu^2T_{42}+\lambda^3\mu^3T_{33}+\lambda^2\mu^4T_{24}$ by
$\lambda^2\mu^4>0$ and putting $t=\lambda/\mu$, non-negativity for all
$\lambda,\mu\ne0$ becomes
\[
  T_{42}\,t^2+T_{33}\,t+T_{24}\ \ge\ 0 \qquad\text{for all }t\ne0,
\]
hence for all $t\in\R$ by continuity. Setting $t=0$ gives $T_{24}\ge0$, and
dividing by $t^2$ and letting $t\to\infty$ gives $T_{42}\ge0$. A quadratic
with non-negative leading and constant coefficients is non-negative on $\R$
if and only if $T_{33}^2\le4T_{42}T_{24}$, which is the stated condition. The
converse is immediate.
\end{proof}

Corollary~\ref{cor:gram} exhibits the same shape on the $\Th$ side: $P,Q\succeq0$
with $R=P^{1/2}CQ^{1/2}$ for a contraction $C$. The proof of Theorem~\ref{thm:main} amounts to matching the two.

\section{The sector \texorpdfstring{$(4,2)$}{(4,2)}}\label{sec:42}

Let $c_1=\langle X^4Y^2\rangle$, $c_2=\langle X^3YXY\rangle$,
$c_3=\langle X^2YX^2Y\rangle$ be the three bracelet classes of content $(4,2)$.

\begin{lemma}\label{lem:42alg}
Recall $k_1=X^2Y-XYX$ and $k_2=XYX-YX^2$ (Definition~\ref{def:K}). Then
$k_1^*k_1 = k_2^*k_2 = c_1-2c_2+c_3$ and $k_1^*k_2=k_2^*k_1=2c_2-2c_3$.
Consequently, with $s=\tr P$ and $m=P_{12}$,
\[
  f_{42} \;=\; s\,c_1 + (4m-2s)\,c_2 + (s-4m)\,c_3 .
\]
\end{lemma}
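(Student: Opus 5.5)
The plan is to expand the four products $k_i^*k_j$ ($i,j\in\{1,2\}$) in the free algebra, sort each resulting word of content $(4,2)$ into one of the three bracelet classes $c_1,c_2,c_3$, and add up coefficients. By \S\ref{sec:words}, bracelet classes are the coordinates on the symmetric part. So it is legitimate to record a polynomial of content $(4,2)$ by its three class sums, and the claimed identities are to be read as equalities in these coordinates, i.e.\ modulo $\cyc$ and reversal.

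\textbf{First, $k_1^*k_1$.} Since $k_1^*=YX^2-XYX$,
\[
  k_1^*k_1=(YX^2-XYX)(X^2Y-XYX)=YX^4Y-YX^3YX-XYX^3Y+XYX^2YX .
\]
Each word is identified by rotation:
\begin{itemize}
  \item $YX^4Y$ rotates to $X^4Y^2$, giving $c_1$;
  \item $YX^3YX$ rotates to $X^3YXY$, giving $c_2$;
  \item $XYX^3Y$ rotates to $X^3YXY$, again giving $c_2$;
  \item $XYX^2YX$ rotates to $X^2YX^2Y$, giving $c_3$.
\end{itemize}
Hence $k_1^*k_1=c_1-2c_2+c_3$.

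\textbf{Next, $k_2^*k_2$.} Here $k_2^*=XYX-X^2Y$, and I expect the same pattern. The quickest route is to note that $k_2$ is, up to sign, the reversal of $k_1$. Therefore $k_2^*k_2$ is the reversal of a rotation-equivalent of $k_1k_1^*$, and $k_1k_1^*$ is cyclically equivalent to $k_1^*k_1$. Bracelet sums are invariant under both operations, so $k_2^*k_2$ has the same bracelet coordinates as $k_1^*k_1$. If this shortcut becomes awkward to state, I will simply expand the product directly.

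\textbf{Then the cross terms.} I expand
\[
  k_1^*k_2=(YX^2-XYX)(XYX-YX^2)
\]
into four words and sort them the same way, expecting $2c_2-2c_3$. Since $k_2^*k_1=(k_1^*k_2)^*$ and reversal preserves bracelet classes, $k_2^*k_1$ has the same coordinates.

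\textbf{Finally, assemble $f_{42}$.} By content additivity (\S\ref{sec:words}) and Corollary~\ref{cor:gram}, only the block $P$ contributes to content $(4,2)$. The block $R$ yields content $(3,3)$ and $Q$ yields $(2,4)$. Since $P$ is symmetric, the off-diagonal entry $m=P_{12}$ appears twice. This gives
\[
  f_{42}=(P_{11}+P_{22})(c_1-2c_2+c_3)+2m(2c_2-2c_3),
\]
which rearranges to $s\,c_1+(4m-2s)c_2+(s-4m)c_3$.

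The only real risk is bookkeeping in assigning the eight words of the two cross products to the correct bracelet classes. In particular, $c_2=\langle X^3YXY\rangle$ and $c_3=\langle X^2YX^2Y\rangle$ must be told apart by the gap pattern of the two $Y$'s around the cycle: gaps $(3,1)$ versus $(2,2)$. As a check on signs, I will verify that each product has total content sum zero, as Lemma~\ref{lem:content-cyclic} requires: $1-2+1=0$ and $2-2=0$.
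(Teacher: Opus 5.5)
Your proposal is correct and follows the paper's proof. Like the paper, you expand each product in the free algebra, sort the words into the bracelet classes $c_1,c_2,c_3$, and assemble using $P_{12}=P_{21}$. Your shortcuts are valid replacements for the paper's ``the same computation'': since $k_2=-k_1^*$ you get $k_2^*k_2=k_1k_1^*\cyc k_1^*k_1$ with no reversal needed, and $k_2^*k_1=(k_1^*k_2)^*$ handles the other cross term. The cross-term expansion you only announced also comes out as you expected: $k_1^*k_2=YX^3YX-YX^2YX^2-XYX^2YX+XYXYX^2$, whose words lie in $c_2,c_3,c_3,c_2$, giving $2c_2-2c_3$.
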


\begin{proof}
Expanding, $k_1^*k_1=(YX^2-XYX)(X^2Y-XYX)$ equals
\[
  YX^4Y-YX^3YX-XYX^3Y+XYX^2YX ,
\]
whose four words lie in the bracelet classes $c_1,c_2,c_2,c_3$ respectively,
giving $c_1-2c_2+c_3$; the computation for $k_2^*k_2$ is symmetric. Similarly
$k_1^*k_2=(YX^2-XYX)(XYX-YX^2)$ expands to
\[
  YX^3YX-YX^2YX^2-XYX^2YX+XYXYX^2 .
\]
The first and last words rotate to $X^3YXY$ and the middle two to $X^2YX^2Y$,
so $k_1^*k_2=2c_2-2c_3$; the same computation gives $k_2^*k_1$.

Finally, since $H$ is symmetric and $k_1^*k_1=k_2^*k_2$,
\[
  f_{42}=P_{11}k_1^*k_1+2P_{12}k_1^*k_2+P_{22}k_2^*k_2
        =(P_{11}+P_{22})(c_1-2c_2+c_3)+2P_{12}(2c_2-2c_3),
\]
and substituting $P_{11}+P_{22}=s$, $P_{12}=m$ gives
$f_{42}=s\,c_1+(4m-2s)\,c_2+(s-4m)\,c_3$.
\end{proof}

Since the coefficients of $f_{42}$ sum to zero (Lemma~\ref{lem:content-cyclic}), the sector is
two-dimensional, parametrized by $(s,u)$ with $u$ the coefficient of $c_2$ and
the coefficient of $c_3$ equal to $-s-u$; equivalently $m=(u+2s)/4$.

\begin{proposition}\label{prop:42}
For $f_{42}$ in the content-$(4,2)$ part of $Z$, with coordinates $(s,u)$, the
following are equivalent:
\begin{enumerate}
  \item[(a)] $f_{42}$ arises from some $P\succeq0$;
  \item[(b)] $T_{42}\ge0$ on all symmetric pairs;
  \item[(c)] $T_{42}\ge0$ on all pairs in $\Sym_2(\R)$;
  \item[(d)] $(s,u)$ lies in
  \begin{equation}\label{eq:42cone}
    \{\,s\ge0,\qquad -4s\le u\le 0\,\}.
  \end{equation}
\end{enumerate}
\end{proposition}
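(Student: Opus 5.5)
The plan is to close the cycle (a)$\Rightarrow$(b)$\Rightarrow$(c)$\Rightarrow$(d)$\Rightarrow$(a). Only (c)$\Rightarrow$(d) needs an idea: explicit $2\times2$ witnesses.

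\emph{(a)$\Rightarrow$(b).} Suppose $f_{42}$ arises from $P\succeq0$. Then the $4\times4$ matrix $H=\left(\begin{smallmatrix}P&0\\0&0\end{smallmatrix}\right)$ is positive semidefinite. By Corollary~\ref{cor:gram}, $f_{42}\cyc\sum_{i,j}H_{ij}k_i^*k_j$ is a sum of Hermitian squares up to commutators, so $T_{42}\ge0$ by \eqref{eq:inclusion}.

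\emph{(b)$\Rightarrow$(c).} This is immediate, since (c) restricts (b) to $2\times2$ pairs.

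\emph{(d)$\Rightarrow$(a).} Given $(s,u)$ in \eqref{eq:42cone}, set $m=(u+2s)/4$ as in Lemma~\ref{lem:42alg}. Take the explicit matrix
\[
  P=\begin{pmatrix} s/2 & m\\ m & s/2\end{pmatrix},
\]
which has $\tr P=s$ and $P_{12}=m$. By Lemma~\ref{lem:42alg} it produces exactly $f_{42}$. Its eigenvalues are $s/2\pm m$, so $P\succeq0$ if and only if $|u+2s|\le 2s$, i.e.\ $-4s\le u\le0$. This is precisely \eqref{eq:42cone}, and $s\ge0$ then follows automatically.

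\emph{(c)$\Rightarrow$(d).} First write the trace in the $(s,u)$ coordinates:
\[
  T_{42}=s\,(\tr c_1-\tr c_3)+u\,(\tr c_2-\tr c_3),
\]
using that the coefficient of $c_3$ is $-s-u$. I would use the one-parameter family $A=\operatorname{diag}(a,b)$, $B=\sigma_1$. Since $B^2=I$ and $BAB=\operatorname{diag}(b,a)$, the three traces are
\[
  \tr c_1=a^4+b^4,\qquad \tr c_2=\tr\bigl(A^3\,BAB\bigr)=ab(a^2+b^2),\qquad \tr c_3=2a^2b^2 .
\]
Hence
\[
  T_{42}=(a-b)^2\bigl[s(a+b)^2+u\,ab\bigr].
\]

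Non-negativity for all $a\ne b$ forces $s(a+b)^2+uab\ge0$ whenever $a\ne b$. Taking $a=1$, $b=-1$ gives $-u\ge0$, which is the facet $u\le0$; this is the Pauli pair $(\sigma_3,\sigma_1)$, for which $T_{42}=-4u$. Letting $b\to a=1$ and using continuity of the bracket gives $4s+u\ge0$, the other facet. Together the two facets imply $s\ge0$.

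The main obstacle is computing the three bracelet traces correctly for a witness family. In particular, for $c_2$ one must check that the trace of any representative word of the class gives the same value. Once that is done, a single family already certifies both facets of \eqref{eq:42cone}.
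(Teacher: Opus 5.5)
Your proof is correct. The links (a)$\Rightarrow$(b)$\Rightarrow$(c) and (d)$\Rightarrow$(a) are essentially the paper's, which uses the same $P=\tfrac s2 I+m\sigma_1$. You differ from the paper in two places.

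First, the paper proves (a)$\Leftrightarrow$(d) directly, getting necessity from $m^2\le P_{11}P_{22}\le(s/2)^2$. You instead obtain (a)$\Rightarrow$(d) by going around the cycle through the trace witnesses, which costs nothing since those witnesses are needed anyway.

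Second, and more substantively, the paper certifies the two facets with two different devices. It uses the Pauli pair for $u\le0$. For $4s+u\ge0$ it uses a second-order expansion of $T_{42}(I+\varepsilon M,B)$, whose leading coefficient is $(4s+u)\cdot\tfrac12\tr([M,B]^\top[M,B])$. Your single family $A=\operatorname{diag}(a,b)$, $B=\sigma_1$ contains both witnesses. The choice $(a,b)=(1,-1)$ is the Pauli pair. The choice $(a,b)=(1+\varepsilon,1-\varepsilon)$ is exactly the paper's perturbation with $M=\sigma_3$, and there your formula gives $T_{42}=4\varepsilon^2(4s+u)-4u\varepsilon^4$ with no remainder to control.

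Your closed form can be pushed one step further. Since $m=(u+2s)/4$, the bracket $s(a+b)^2+uab$ equals $p_+(a+b)^2+p_-(a-b)^2$, where $p_\pm=\tfrac s2\pm m$ are the eigenvalues of $P$. So on this family, non-negativity of $T_{42}$ is literally $P\succeq0$. This is the same diagonal mechanism as families A and B in Proposition~\ref{prop:33}.

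What the paper's perturbation argument buys in exchange is a conceptual explanation of the facet $4s+u\ge0$. It is the second-order condition at the commuting pairs $(I,B)$, where $T_{42}$ vanishes, and it is detected by every non-commuting direction $M$, not only by one special family.
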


\begin{proof}
A positive semidefinite $P$ with $\tr P=s$ and $P_{12}=m$ exists if and only if
$s\ge0$ and $|m|\le s/2$. Necessity: $m^2\le P_{11}P_{22}$ because the
$2\times2$ determinant of a positive semidefinite matrix is non-negative, and
$P_{11}P_{22}\le(s/2)^2$ by the arithmetic--geometric mean inequality.
Sufficiency: take $P=\tfrac s2 I+m\sigma_1$, whose eigenvalues are
$\tfrac s2\pm m$. (For a \emph{given} $P$ these two conditions are not
sufficient; it is the existence of some such $P$ that is at issue.) By
Lemma~\ref{lem:42alg} this region is exactly \eqref{eq:42cone}, and
$\Th\subseteq\{\text{trace-positive}\}$ gives one inclusion. For the converse it suffices to exhibit, for each of the two facets,
a pair in $\Sym_2(\R)$ whose trace functional is the corresponding outer
normal.

\emph{Facet $u\le0$.} Take the Pauli pair $A=a\sigma_3$, $B=b\sigma_1$. Then
$A^2=a^2I$, $B^2=b^2I$ and $(\sigma_3\sigma_1)^2=-I$, so the traces of $c_1,c_2,c_3$ are
$2a^4b^2$, $-2a^4b^2$, $2a^4b^2$. Hence
\[
  T_{42}=s\,(2a^4b^2)+u\,(-2a^4b^2)+(-s-u)(2a^4b^2)=-4a^4b^2\,u ,
\]
so $T_{42}\ge0$ on this pair forces $u\le0$.

\emph{Facet $4s+u\ge0$.} Take $A=I+\varepsilon M$ with $M,B\in\Sym_2(\R)$. All
three traces agree to order $\varepsilon^0$ and $\varepsilon^1$, being
$\tr(B^2)$ and $4\tr(MB^2)$ respectively for every $M$ and $B$; since the
three coefficients $s,u,-s-u$ sum to zero, those two orders cancel
identically in $T_{42}$. At order $\varepsilon^2$, writing
$\pi_{\!M}=\tr(M^2B^2)$ and $\kappa_{\!M}=\tr(MBMB)$,
they are $6\pi_{\!M}$, $3\kappa_{\!M}+3\pi_{\!M}$ and $4\kappa_{\!M}+2\pi_{\!M}$,
so that
\[
  T_{42}(I+\varepsilon M,B)
  \;=\;\varepsilon^2\Big[s\,6\pi_{\!M}+u\,(3\kappa_{\!M}+3\pi_{\!M})
        +(-s-u)(4\kappa_{\!M}+2\pi_{\!M})\Big]+O(\varepsilon^3)
\]
\[
  \phantom{T_{42}(I+\varepsilon M,B)}
  \;=\;\varepsilon^2(\pi_{\!M}-\kappa_{\!M})(4s+u)+O(\varepsilon^3).
\]
Dividing by $\varepsilon^2>0$ and letting $\varepsilon\to0$, non-negativity of
$T_{42}$ forces $(\pi_{\!M}-\kappa_{\!M})(4s+u)\ge0$. Now
\[
  \pi_{\!M}-\kappa_{\!M}=\tfrac12\tr\big([M,B]^\top[M,B]\big)\ \ge 0,
\]
with equality only if $M$ and $B$ commute. Taking $M=\sigma_3$, $B=\sigma_1$
gives $\pi_{\!M}-\kappa_{\!M}=4>0$, and therefore $4s+u\ge0$.

The remaining inequality $s\ge0$ follows from the other two: $u\le0$ and
$4s+u\ge0$ give $4s\ge-u\ge0$.
\end{proof}

The sector $(2,4)$ is obtained from $(4,2)$ by exchanging $X$ and $Y$; we write
$S=\tr Q$, $n=Q_{12}$ and $U$ for the coefficient of $\langle XYXY^3\rangle$,
so that $n=(U+2S)/4$.

\section{The sector \texorpdfstring{$(3,3)$}{(3,3)}}\label{sec:33}

\subsection{The cross-block algebra}

Let $d_1=\langle X^3Y^3\rangle$, $d_2=\langle X^2YXY^2\rangle$,
$d_3=\langle XYXYXY\rangle$ be the three bracelet classes of content $(3,3)$.

\begin{lemma}\label{lem:33alg}
Recall $k_1=X^2Y-XYX$, $k_2=XYX-YX^2$, $k_3=XY^2-YXY$, $k_4=YXY-Y^2X$
(Definition~\ref{def:K}). Then
$k_1^*k_3=k_2^*k_4=d_1-2d_2+d_3$ and $k_1^*k_4=k_2^*k_3=d_2-d_3$.
Hence the map $R\mapsto f_{33}$ has rank two and factors through
\[
  \alpha=\tfrac12\tr R,\qquad \beta=\tfrac12\tr(R\sigma_1).
\]
Explicitly, since $f_{33}=2\sum_{i\le2<j}H_{ij}k_i^*k_j$,
\begin{equation}\label{eq:f33}
  f_{33} \;=\; 4\alpha\,(d_1-2d_2+d_3)\;+\;4\beta\,(d_2-d_3).
\end{equation}
\end{lemma}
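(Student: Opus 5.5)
The plan is to prove the four product identities by direct expansion, reading off the bracelet class of each resulting word as in Lemma~\ref{lem:42alg}, and then to assemble $f_{33}$ from the off-diagonal block $R$ of \eqref{eq:Hblocks}.

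\emph{The four products.} For instance,
\[
k_1^*k_3=(YX^2-XYX)(XY^2-YXY)=YX^3Y^2-YX^2YXY-XYX^2Y^2+XYXYXY .
\]
The first word rotates to $X^3Y^3$, so it lies in $d_1$. The second rotates to $X^2YXY^2$, so it lies in $d_2$. The third rotates to $X^2Y^2XY$; this is the reversal (up to rotation) of $X^2YXY^2$, so it also lies in $d_2$. This is exactly where bracelet classes, rather than cyclic classes, are needed, and where the restriction to the symmetric part from \S\ref{sec:words} enters. The last word is $d_3$, so $k_1^*k_3=d_1-2d_2+d_3$.

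Similarly,
\[
k_1^*k_4=YX^2YXY-YX^2Y^2X-XYXYXY+XYXY^2X .
\]
The four words lie in $d_2,d_2,d_3,d_2$ respectively, so $k_1^*k_4=d_2-d_3$. The products $k_2^*k_4$ and $k_2^*k_3$ I would obtain either by the same expansion or by a symmetry argument. The symmetry I would try first is reversal composed with the swap $X\leftrightarrow Y$. This sends $k_1\mapsto -k_4$ and $k_2\mapsto -k_3$, and it preserves content $(3,3)$. I still need to check that it preserves each class $d_1,d_2,d_3$, which holds at least for $d_1$ and $d_3$ and should be verified for $d_2$. If that check fails, I would simply fall back on the direct expansion.

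I also need the transposed products $k_j^*k_i$ for $j>2\ge i$. Since $k_j^*k_i=(k_i^*k_j)^*$ and bracelet classes are closed under reversal, these equal the same combinations of the $d$'s.

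\emph{Assembly.} The content-$(3,3)$ part of $\sum_{i,j}H_{ij}k_i^*k_j$ comes only from pairs with one index in $\{1,2\}$ and the other in $\{3,4\}$, since the contents add by \S\ref{sec:words}. Using the symmetry of $H$ and $k_j^*k_i=k_i^*k_j$ modulo classes, this gives $f_{33}=2\sum_{i\le2<j}H_{ij}k_i^*k_j$.

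With $R_{11}=H_{13}$, $R_{12}=H_{14}$, $R_{21}=H_{23}$, $R_{22}=H_{24}$, this becomes
\[
f_{33}=2(R_{11}+R_{22})(d_1-2d_2+d_3)+2(R_{12}+R_{21})(d_2-d_3).
\]
Since $R_{11}+R_{22}=\tr R=2\alpha$ and $R_{12}+R_{21}=\tr(R\sigma_1)=2\beta$, this is \eqref{eq:f33}.

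\emph{Rank two.} The map $R\mapsto f_{33}$ factors through the linear map $R\mapsto(\alpha,\beta)$. Its image is spanned by $d_1-2d_2+d_3$ and $d_2-d_3$, which are linearly independent because only the first involves $d_1$. Hence the rank is exactly two.

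The only real obstacle is the bookkeeping of bracelet classes, in particular correctly identifying $X^2Y^2XY$ with $d_2$; everything else is linear algebra.
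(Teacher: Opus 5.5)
Your proof is correct and takes the paper's route: the paper treats this lemma as a finite expansion, like Lemma~\ref{lem:42alg}, and records the cyclic-class forms in the Remark after it. Your expansions of $k_1^*k_3$ and $k_1^*k_4$, the bracelet bookkeeping (in particular $X^2Y^2XY\in d_2$), the assembly $f_{33}=2\sum_{i\le2<j}H_{ij}k_i^*k_j$ and the rank count all check out.

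The one thing to fix is the symmetry shortcut for the remaining two products.

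\emph{The map is misidentified.} Reversal followed by $X\leftrightarrow Y$ sends $k_1\mapsto k_3$ and $k_2\mapsto k_4$. The images $k_1\mapsto-k_4$, $k_2\mapsto-k_3$ belong to the plain swap.

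\emph{The plain swap only covers one product.} It does fix $d_1,d_2,d_3$, since it exchanges the cyclic classes of $X^2YXY^2$ and $X^2Y^2XY$. So your proposed check would pass and the fallback would never be triggered. Yet the swap carries $k_1^*k_3$ to $k_4^*k_2=(k_2^*k_4)^*$, which settles $k_2^*k_4$. It sends $k_1^*k_4$ only to $k_4^*k_1=(k_1^*k_4)^*$, so it says nothing about $k_2^*k_3$.

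\emph{Two ways to close this.} Either expand directly:
\[
k_2^*k_3=XYX^2Y^2-XYXYXY-X^2YXY^2+X^2Y^2XY .
\]
The four words lie in $d_2,d_3,d_2,d_2$ respectively, giving $d_2-d_3$. Or use $k_2=-k_1^*$ and $k_4=-k_3^*$, which give both remaining identities at once:
\[
k_2^*k_3=-k_1k_3\cyc-k_3k_1=(k_1^*k_4)^*,\qquad k_2^*k_4=k_1k_3^*\cyc k_3^*k_1=(k_1^*k_3)^*.
\]
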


The two polynomials on the right of \eqref{eq:f33} are combinations of the
content-$(3,3)$ classes $d_1,d_2,d_3$ whose coefficients sum to zero, so both
lie in the content-$(3,3)$ part of $Z$ (Lemma~\ref{lem:content-cyclic}). They
are linearly independent, and that part is two-dimensional by
Lemma~\ref{lem:dimZ}, so they span it. Consequently every $f\in Z$ determines
a unique pair $(\alpha,\beta)$ through \eqref{eq:f33}, and
$R=\alpha I+\beta\sigma_1$ is one block producing it. Below, $(\alpha,\beta)$
always denotes these coordinates.

\begin{remark}
Under cyclic equivalence alone the sector carries four classes and the
identities read $k_1^*k_3=e_1-e_2-e_3+e_4$, $k_1^*k_4=2e_2-e_3-e_4$ and
$k_2^*k_3=-e_2+2e_3-e_4$, where $e_1,\dots,e_4$ are the cyclic classes of
$X^3Y^3$, $X^2YXY^2$, $X^2Y^2XY$, $XYXYXY$ respectively. The displayed form is
their image under the reversal-induced identification $e_2=e_3$
(\S\ref{sec:words}), so that $d_1=e_1$, $d_2=e_2=e_3$, $d_3=e_4$. The
identification is legitimate precisely because $Z$ consists of symmetric
elements; the antisymmetric combination $e_2-e_3$ is the direction excluded in
Remark~\ref{rem:needsym}.
\end{remark}

\subsection{Removing the residual Gram freedom}\label{sec:union}

By Lemma~\ref{lem:42alg}, $f_{42}$ depends on $P$ only through $s=\tr P$ and
$m=P_{12}$. Hence $f_{42}$ determines $P$ only up to the remaining parameter
$\delta=\tfrac12(P_{11}-P_{22})$, which may take any value compatible with
$P\succeq0$; similarly, $f_{24}$ determines $Q$ only up to
$\delta'=\tfrac12(Q_{11}-Q_{22})$. A larger $R$ may therefore be admissible for
one choice of $(\delta,\delta')$ than for another, and the correct statement of
$\Th\cap Z$ involves the union over all admissible choices. We show the union
is attained at $\delta=\delta'=0$; as noted in Remark~\ref{rem:notneeded}, the
proof of Theorem~\ref{thm:main} does not depend on this.

The argument uses two maximizations, which should be kept apart. For fixed
$(\delta,\delta')$ and a fixed direction $M$, maximizing over the contractions
$C$ gives the support function $h_{\delta,\delta'}(M)$ of the attainable set,
i.e.\ how far that set extends in the direction $M$. We then maximize $h_{\delta,\delta'}(M)$ over the Gram freedom
$(\delta,\delta')$ and show that $\delta=\delta'=0$ attains the maximum in
every direction (Proposition~\ref{prop:ridge}), so that no other choice
enlarges the attainable set (Corollary~\ref{cor:union}).

Write
\[
  P=\tfrac s2 I+m\sigma_1+\delta\sigma_3,\qquad
  Q=\tfrac S2 I+n\sigma_1+\delta'\sigma_3 ,
\]
and let $M=\lambda I+\mu\sigma_1$ represent, through $R\mapsto\tr(MR)$, a
linear functional on the two-dimensional space of attainable $f_{33}$. The
support function of the admissible set of $R$ in the direction $M$ is
\[
  h_{\delta,\delta'}(M)
  \;=\;\max_{\|C\|\le1}\ \langle M, P^{1/2}CQ^{1/2}\rangle
  \;=\;\big\|Q^{1/2}MP^{1/2}\big\|_* ,
\]
the nuclear norm.

\begin{lemma}\label{lem:reparam}
Let $P,Q\succeq0$ and $M=\lambda I+\mu\sigma_1$. Then
$\big\|Q^{1/2}MP^{1/2}\big\|_*=\big\|(MQM)^{1/2}P^{1/2}\big\|_*$. Moreover,
writing $Q'=MQM=\tfrac{S'}{2}I+n'\sigma_1+\delta''\sigma_3$, we have
\[
  \tfrac{S'}{2}=\tfrac S2(\lambda^2+\mu^2)+2n\lambda\mu,\qquad
  n'=S\lambda\mu+n(\lambda^2+\mu^2),\qquad
  \delta''=(\det M)\,\delta' ,
\]
and $\det Q'=(\det M)^2\det Q$.
\end{lemma}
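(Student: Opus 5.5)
The proof has two parts: the nuclear-norm identity, which is an abstract fact about polar decompositions, and the formulas for $Q'=MQM$, which are direct Pauli-matrix algebra.

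\emph{The nuclear norm identity.} Since $M=\lambda I+\mu\sigma_1$ is symmetric, $\|Q^{1/2}MP^{1/2}\|_*=\|P^{1/2}MQ^{1/2}\|_*$, the nuclear norm being invariant under transposition. Put $X=P^{1/2}MQ^{1/2}$. The nuclear norm is $\tr(XX^\top)^{1/2}$, i.e.\ it depends only on the Gram matrix $XX^\top$. Here
\[
XX^\top=P^{1/2}MQMP^{1/2}=P^{1/2}Q'P^{1/2}.
\]
Set $Y=P^{1/2}(Q')^{1/2}$. Then $YY^\top=P^{1/2}Q'P^{1/2}$ as well, so $\|X\|_*=\|Y\|_*$. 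Finally $\|Y\|_*=\|Y^\top\|_*=\|(Q')^{1/2}P^{1/2}\|_*$, which is the claimed identity. The only input is that $Q'=MQM$ is positive semidefinite, which holds because $M$ is symmetric. No invertibility of $M$, $P$ or $Q$ is needed.

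\emph{The coordinates of $Q'$.} Write $Q=\tfrac S2I+n\sigma_1+\delta'\sigma_3$ and expand $(\lambda I+\mu\sigma_1)\,Q\,(\lambda I+\mu\sigma_1)$ using three relations: $\sigma_1^2=I$; $\sigma_1\sigma_3\sigma_1=-\sigma_3$ (from anticommutation); and $\sigma_3\sigma_1+\sigma_1\sigma_3=0$.

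For the identity component,
\[
M\cdot\tfrac S2 I\cdot M=\tfrac S2M^2=\tfrac S2\big((\lambda^2+\mu^2)I+2\lambda\mu\sigma_1\big).
\]
For the $\sigma_1$ component, since $M$ and $\sigma_1$ commute,
\[
M\,n\sigma_1\,M=n\sigma_1M^2=n\big((\lambda^2+\mu^2)\sigma_1+2\lambda\mu I\big).
\]
For the $\sigma_3$ component,
\[
M\,\delta'\sigma_3\,M=\delta'\big(\lambda^2\sigma_3+\lambda\mu(\sigma_3\sigma_1+\sigma_1\sigma_3)+\mu^2\sigma_1\sigma_3\sigma_1\big)=\delta'(\lambda^2-\mu^2)\sigma_3 .
\]
Collecting terms gives $\tfrac{S'}2=\tfrac S2(\lambda^2+\mu^2)+2n\lambda\mu$, $n'=S\lambda\mu+n(\lambda^2+\mu^2)$ and $\delta''=(\lambda^2-\mu^2)\delta'$. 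Since $\det M=\lambda^2-\mu^2$, this last one is $\delta''=(\det M)\,\delta'$. The determinant statement is multiplicativity: $\det Q'=\det(M)^2\det Q$.

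The only delicate step is the nuclear-norm identity, specifically keeping the transpositions straight so that $MQM$, and not $QM^2$, appears. Routing the argument through the Gram matrix $XX^\top$ handles this cleanly. The rest is bookkeeping.
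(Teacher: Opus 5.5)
Your proposal is correct and follows essentially the same route as the paper: the paper also proves the nuclear-norm identity by observing that both matrices have the same Gram matrix $P^{1/2}MQMP^{1/2}$ (it uses $A^\top A$ directly, so your two transpositions are not needed). Your Pauli-matrix bookkeeping for $S'$, $n'$, $\delta''$ and the multiplicativity argument for $\det Q'$ also match the paper's.
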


\begin{proof}
For any matrix $A$ the squared singular values of $A$ are the eigenvalues of
$A^\top A$. Since $M$ is symmetric, both $A=Q^{1/2}MP^{1/2}$ and
$A=(MQM)^{1/2}P^{1/2}$ give $A^\top A=P^{1/2}MQMP^{1/2}$; hence the two
matrices have the same singular values and the same nuclear norm.

For $Q'$, write $Q=\tfrac S2I+n\sigma_1+\delta'\sigma_3$ and use
$\sigma_1^2=\sigma_3^2=I$, $\sigma_1\sigma_3=-\sigma_3\sigma_1$. Since $\sigma_1$
commutes with $M$,
\[
  M^2=(\lambda^2+\mu^2)I+2\lambda\mu\,\sigma_1,\qquad
  M\sigma_1M=\sigma_1M^2=2\lambda\mu\,I+(\lambda^2+\mu^2)\sigma_1 ,
\]
and since $\sigma_3M=(\lambda I-\mu\sigma_1)\sigma_3$,
\[
  M\sigma_3M=M(\lambda I-\mu\sigma_1)\sigma_3=(\lambda^2-\mu^2)\sigma_3
  =(\det M)\,\sigma_3 .
\]
Collecting the coefficients of $I$, $\sigma_1$ and $\sigma_3$ in
$Q'=\tfrac S2M^2+nM\sigma_1M+\delta'M\sigma_3M$ gives the three formulas.
Finally $\det Q'=(\det M)^2\det Q$ by multiplicativity of the determinant.
\end{proof}

The essential point is that $S'$ and $n'$ do not involve $\delta'$ at all: the
reparametrization rescales $\delta'$ and leaves the rest of $Q$ alone.

\begin{proposition}\label{prop:ridge}
Fix a direction $M=\lambda I+\mu\sigma_1$, and let
$h_{\delta,\delta'}(M)=\big\|Q^{1/2}MP^{1/2}\big\|_*$ be the support function
defined above. Put $\rho=\tfrac{s^2}{4}-m^2$ and
$\rho'=(\det M)^2\big(\tfrac{S^2}{4}-n^2\big)$. If $\rho'>0$ then
\begin{equation}\label{eq:cosine}
  h_{\delta,\delta'}(M)^2 \;=\; c_0 + 2\sqrt{\rho\rho'}\,\cos(\varphi-\psi),
  \qquad \delta=\sqrt \rho\,\sin\varphi,\quad \delta''=\sqrt{\rho'}\sin\psi ,
\end{equation}
with $c_0$ depending on $M$ but not on $\delta$ or $\delta''$, so that
$h_{\delta,\delta'}(M)$ is maximized exactly on the ridge
$\delta/\sqrt \rho=\delta''/\sqrt{\rho'}$; if $\rho'=0$ then
$h_{\delta,\delta'}(M)$ does not depend on $\delta$ at all. In either case
$\delta=\delta'=0$ is optimal.
\end{proposition}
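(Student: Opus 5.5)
The plan is to compute $h_{\delta,\delta'}(M)^2$ in closed form, using the fact that everything is $2\times2$. By Lemma~\ref{lem:reparam}, $h_{\delta,\delta'}(M)=\|A\|_*$ with $A=Q'^{1/2}P^{1/2}$ and $Q'=MQM\succeq0$. For a real $2\times2$ matrix $A$ with singular values $\sigma_1\ge\sigma_2\ge0$ we have
\[
\|A\|_*^2=(\sigma_1+\sigma_2)^2=\|A\|_F^2+2|\det A|.
\]
Here $\|A\|_F^2=\tr(A^\top A)=\tr(P^{1/2}Q'P^{1/2})=\tr(Q'P)$. Also $|\det A|=\sqrt{\det Q'\,\det P}$, since both factors are positive semidefinite. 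So the whole task reduces to writing $\tr(Q'P)$, $\det P$ and $\det Q'$ in the Pauli coordinates.

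For the trace, I will use $\tr\big((aI+b\sigma_1+c\sigma_3)(a'I+b'\sigma_1+c'\sigma_3)\big)=2(aa'+bb'+cc')$. This holds because the cross terms $\sigma_1\sigma_3$ are traceless and the squares are $I$. With $P=\tfrac s2I+m\sigma_1+\delta\sigma_3$ and $Q'=\tfrac{S'}2I+n'\sigma_1+\delta''\sigma_3$ it gives
\[
\tr(Q'P)=c_0+2\delta\delta'',\qquad c_0=\tfrac{sS'}{2}+2mn'.
\]
By Lemma~\ref{lem:reparam}, $S'$ and $n'$ depend only on $S,n,\lambda,\mu$, so $c_0$ is independent of $\delta$ and $\delta'$. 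For the determinants, $\det P=\tfrac{s^2}4-m^2-\delta^2=\rho-\delta^2$. Lemma~\ref{lem:reparam} gives $\det Q'=(\det M)^2\det Q=(\det M)^2\big(\tfrac{S^2}{4}-n^2-\delta'^2\big)=\rho'-\delta''^2$, using $\delta''=(\det M)\delta'$. Therefore
\[
h_{\delta,\delta'}(M)^2=c_0+2\delta\delta''+2\sqrt{(\rho-\delta^2)(\rho'-\delta''^2)}.
\]

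Next comes the trigonometric substitution. Positive semidefiniteness forces $|\delta|\le\sqrt\rho$ and $|\delta''|\le\sqrt{\rho'}$, so if $\rho'>0$ I can write $\delta=\sqrt\rho\sin\varphi$ and $\delta''=\sqrt{\rho'}\sin\psi$ with $\varphi,\psi\in[-\pi/2,\pi/2]$. Then the cosines are non-negative and $\sqrt{\rho-\delta^2}=\sqrt\rho\cos\varphi$, $\sqrt{\rho'-\delta''^2}=\sqrt{\rho'}\cos\psi$. The expression becomes $c_0+2\sqrt{\rho\rho'}(\sin\varphi\sin\psi+\cos\varphi\cos\psi)$, which is \eqref{eq:cosine}. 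It is maximized exactly when $\varphi=\psi$, that is, on the ridge $\delta/\sqrt\rho=\delta''/\sqrt{\rho'}$. The degenerate case $\rho=0$ forces $\delta=0$ and is harmless. The choice $\delta=\delta'=0$ gives $\varphi=\psi=0$, which lies on the ridge, so it is optimal.

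If $\rho'=0$, then $\det Q'\ge0$ forces $\delta''=0$ and $\det Q'=0$. This covers both $\det M=0$ and $\det Q$ extremal. Hence $h^2=c_0$, which is independent of $\delta$, and again $\delta=\delta'=0$ is optimal.

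The main obstacle is mostly bookkeeping, since the computation itself is short. The points to get right are the identity $\|A\|_*^2=\|A\|_F^2+2|\det A|$, the correct sign and branch choices in the parametrization (non-negative cosines), and the observation that $\delta'$ enters only through $\delta''$, so that optimizing over $\delta''$ is the same as optimizing over $\delta'$ whenever $\det M\ne0$.
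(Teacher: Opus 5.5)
Your proposal is correct and follows the paper's proof essentially step for step. The shared steps are the reduction to $\|Q'^{1/2}P^{1/2}\|_*$ via Lemma~\ref{lem:reparam}, and the $2\times2$ closed form: your $\|A\|_*^2=\|A\|_F^2+2|\det A|$ is the paper's $\tr\sqrt N=\sqrt{\tr N+2\sqrt{\det N}}$ with $N=A^\top A=P^{1/2}Q'P^{1/2}$. After that come Pauli orthogonality for $\tr(PQ')$, the sine substitution with non-negative cosines, and the same treatment of the degenerate cases $\rho=0$ and $\rho'=0$ (the only cosmetic point is that naming the singular values $\sigma_1,\sigma_2$ clashes with the Pauli matrix $\sigma_1$ used in the same computation).
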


\begin{proof}
By Lemma~\ref{lem:reparam} we may work with $Q'=MQM$ in place of $Q$, so that
the quantity to be maximized is $\|Q'^{1/2}P^{1/2}\|_*$.
For positive semidefinite $2\times2$ matrices,
$\|Q'^{1/2}P^{1/2}\|_*=\sqrt{\tr(PQ')+2\sqrt{\det P\det Q'}}$, from
$\tr\sqrt{N}=\sqrt{\tr N+2\sqrt{\det N}}$ applied to $N=P^{1/2}Q'P^{1/2}$.
Since $I,\sigma_1,\sigma_3$ are mutually orthogonal for the pairing
$(A,B)\mapsto\tr(AB)$, each with $\tr(\,\cdot\,^2)=2$, only matching
components contribute to
\[
  \tr(PQ')=\tfrac{sS'}{2}+2mn'+2\delta\delta'' ,
\]
so the $\delta$-dependence is carried entirely by the cross term. Also
$\det P=\rho-\delta^2$ and $\det Q'=\rho'-\delta''^2$, and admissibility of
$(\delta,\delta')$ means exactly $|\delta|\le\sqrt\rho$ and
$|\delta''|\le\sqrt{\rho'}$.

If $\rho'=0$ then $\delta''=0$ and $\det Q'=0$, so
$h_{\delta,\delta'}(M)^2=\tfrac{sS'}{2}+2mn'$ is independent of $\delta$ and
there is nothing to prove; this covers in particular the degenerate direction
$\det M=0$. If $\rho=0$ then $\delta=0$ is the only admissible value, and
again there is nothing to prove. So assume $\rho,\rho'>0$. Substituting the
sine parametrization, with $\varphi,\psi\in[-\pi/2,\pi/2]$ so that the cosines
are non-negative, turns
$2\delta\delta''+2\sqrt{(\rho-\delta^2)(\rho'-\delta''^2)}$ into
$2\sqrt{\rho\rho'}(\sin\varphi\sin\psi+\cos\varphi\cos\psi)$, which is
\eqref{eq:cosine}. This is maximal exactly when $\varphi=\psi$. Finally
$\rho'>0$ forces $\det M\ne0$, so $\delta''=0$ if and only if $\delta'=0$, and
the origin lies on the ridge.
\end{proof}

\begin{corollary}\label{cor:union}
For admissible $(\delta,\delta')$ -- those for which $P,Q\succeq0$ -- write
$\mathcal S_{\delta,\delta'}\subset\R^2$ for the set of $(\alpha,\beta)$
attainable from $R=P^{1/2}CQ^{1/2}$ with $\|C\|\le1$. Then
\[
  \bigcup_{\delta,\delta'}\mathcal S_{\delta,\delta'}\;=\;\mathcal S_{0,0}.
\]
\end{corollary}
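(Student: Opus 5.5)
The plan is to use support functions. Every set $\mathcal S_{\delta,\delta'}$ is compact and convex: it is the image of the convex, compact set of contractions $C$ under the linear map $C\mapsto R=P^{1/2}CQ^{1/2}\mapsto(\alpha,\beta)$. The union of such sets need not be convex. So I will not compare convex hulls; I will show directly that each $\mathcal S_{\delta,\delta'}$ is contained in $\mathcal S_{0,0}$. The inclusion $\mathcal S_{0,0}\subseteq\bigcup$ is trivial, because $\delta=\delta'=0$ is admissible whenever some choice is. Indeed $P=\tfrac s2I+m\sigma_1\succeq0$ exactly when $|m|\le s/2$, and this already follows from admissibility of any $\delta$, since $\det P=\rho-\delta^2\ge0$ forces $\rho\ge0$. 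The same holds for $Q$.

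For the reverse inclusion I will use the fact that a compact convex set in $\R^2$ is the intersection of its supporting half-planes. Hence $\mathcal S_{\delta,\delta'}\subseteq\mathcal S_{0,0}$ follows once the support function of the first is bounded by that of the second in every direction. I will identify the directions as follows. By Lemma~\ref{lem:33alg}, $\alpha=\tfrac12\tr R$ and $\beta=\tfrac12\tr(R\sigma_1)$. A linear functional $a\alpha+b\beta$ is therefore $\tr(MR)$ with $M=\tfrac12(aI+b\sigma_1)$, which has the form $\lambda I+\mu\sigma_1$. Every direction in the $(\alpha,\beta)$-plane arises this way. The support function of $\mathcal S_{\delta,\delta'}$ in that direction is then $\max_{\|C\|\le1}\tr(MP^{1/2}CQ^{1/2})=h_{\delta,\delta'}(M)$. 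This equals the nuclear norm $\|Q^{1/2}MP^{1/2}\|_*$, by duality between the operator norm and the nuclear norm (maximize $\tr(CA)$ over contractions with $A=Q^{1/2}MP^{1/2}$).

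Next I apply Proposition~\ref{prop:ridge}. For each fixed $M$ it gives $h_{\delta,\delta'}(M)\le h_{0,0}(M)$ for all admissible $(\delta,\delta')$, whatever the degenerate subcase ($\rho'=0$, $\rho=0$, or $\det M=0$). The supporting half-planes $\{\langle\cdot,M\rangle\le h_{0,0}(M)\}$ cut out $\mathcal S_{0,0}$. Each of them therefore contains $\mathcal S_{\delta,\delta'}$, and this gives $\mathcal S_{\delta,\delta'}\subseteq\mathcal S_{0,0}$.

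The main point needing care is that Proposition~\ref{prop:ridge} is stated as the optimality of $\delta=\delta'=0$ over the joint freedom. For each direction I need it in the pointwise form "for every admissible pair, $h_{\delta,\delta'}(M)\le h_{0,0}(M)$". I will check that this is what its proof delivers. The bound \eqref{eq:cosine} gives $h^2\le c_0+2\sqrt{\rho\rho'}$ for every admissible pair, with equality at $\varphi=\psi=0$. Here $c_0=\tfrac{sS'}{2}+2mn'$ does not depend on $\delta$ or $\delta'$: this is exactly the remark after Lemma~\ref{lem:reparam} that $S'$ and $n'$ are free of $\delta'$. Finally, the degenerate cases are precisely those in which $h$ does not vary, so the inequality holds there as well.
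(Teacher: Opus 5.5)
Your proof is correct and is essentially the paper's argument: both rest on the pointwise bound $h_{\delta,\delta'}(M)\le h_{0,0}(M)$ taken from Proposition~\ref{prop:ridge}, and on the fact that a compact convex set is determined by its support function. The only difference is cosmetic: you get $\mathcal S_{\delta,\delta'}\subseteq\mathcal S_{0,0}$ directly from the supporting half-planes of $\mathcal S_{0,0}$, and you explicitly check that $(0,0)$ is admissible, whereas the paper compares $\mathcal S_{0,0}$ with the convex hull of the union and then sandwiches.
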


\begin{proof}
Each $\mathcal S_{\delta,\delta'}$ is the image of the operator-norm unit ball
under a linear map, hence compact, convex and symmetric about the origin;
its support function is $h_{\delta,\delta'}$, as defined before
Lemma~\ref{lem:reparam}. Proposition~\ref{prop:ridge}
says that $\sup_{\delta,\delta'}h_{\delta,\delta'}=h_{0,0}$ pointwise in the
direction. Since the support function of the closed convex hull of a union is
the supremum of the support functions,
$\operatorname{conv}\bigcup_{\delta,\delta'}\mathcal S_{\delta,\delta'}$ and
$\mathcal S_{0,0}$ have the same support function, and being compact convex
sets they are equal. Hence
\[
  \mathcal S_{0,0}\subseteq\bigcup_{\delta,\delta'}\mathcal S_{\delta,\delta'}
  \subseteq\operatorname{conv}\bigcup_{\delta,\delta'}\mathcal S_{\delta,\delta'}
  =\mathcal S_{0,0},
\]
and all three coincide. The support-function identity alone would only show
that the convex hull of the union equals $\mathcal S_{0,0}$; since the union
need not be convex a priori, it is the inclusion
$\mathcal S_{0,0}\subseteq\bigcup_{\delta,\delta'}\mathcal S_{\delta,\delta'}$
that proves equality.
\end{proof}

\begin{remark}\label{rem:notneeded}
Corollary~\ref{cor:union} is not needed for the proof of
Theorem~\ref{thm:main}. That proof constructs a single positive semidefinite
$H$ and takes $\delta=\delta'=0$ from the outset, using the $P$ and $Q$ built
in the proof of Proposition~\ref{prop:42}; the rectangle
\eqref{eq:rect} attached to $f_{42},f_{24}$ is then determined by $f$ alone,
because $\delta=0$ fixes $P$ once $s$ and $m$ are known. What
Corollary~\ref{cor:union} adds is a direct proof, on the $\Th$ side alone,
that no other admissible $(\delta,\delta')$ enlarges the attainable set; that
\eqref{eq:rect} describes $\Th\cap Z$ exactly also follows from
Theorem~\ref{thm:main}, since $f\in\Th$ is trace-positive by
\eqref{eq:inclusion} and Proposition~\ref{prop:33} then places $(\alpha,\beta)$
in the rectangle. Section~\ref{sec:union} may therefore be skipped: no other result of the paper depends on it.
\end{remark}

\subsection{The attainable rectangle}\label{sec:rect}

From now on we take $\delta=\delta'=0$; by Corollary~\ref{cor:union} this
loses nothing, and the proof of Theorem~\ref{thm:main} does not need even
that (Remark~\ref{rem:notneeded}). Thus
\begin{equation}\label{eq:PQcommute}
  P=\tfrac s2 I+m\sigma_1,\qquad Q=\tfrac S2 I+n\sigma_1 .
\end{equation}
Both then lie in the commutative algebra generated by $I$ and $\sigma_1$, hence
are simultaneously diagonal in the eigenbasis of $\sigma_1$, with eigenvalues
\begin{equation}\label{eq:pq}
  p_\pm=\tfrac s2\pm m\ \ge 0,\qquad q_\pm=\tfrac S2\pm n\ \ge 0 .
\end{equation}

\begin{lemma}\label{lem:rectangle}
With $P,Q$ as in \eqref{eq:PQcommute}, the set of $(\alpha,\beta)$ attainable
from $R=P^{1/2}CQ^{1/2}$ with $\|C\|\le1$ is the rectangle
\begin{equation}\label{eq:rect}
  |\alpha+\beta|\le\sqrt{p_+q_+},\qquad |\alpha-\beta|\le\sqrt{p_-q_-} .
\end{equation}
\end{lemma}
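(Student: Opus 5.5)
Our approach is to diagonalize everything simultaneously in the eigenbasis of $\sigma_1$. Let $O=\tfrac1{\sqrt2}\left(\begin{smallmatrix}1&1\\1&-1\end{smallmatrix}\right)$, so that $O\sigma_1O=\sigma_3$. Then $OPO=\mathrm{diag}(p_+,p_-)$ and $OQO=\mathrm{diag}(q_+,q_-)$. The coordinates of Lemma~\ref{lem:33alg} become $\alpha=\tfrac12\tr R$ and $\beta=\tfrac12\tr(R\sigma_1)$. Setting $\tilde R=ORO$, invariance of the trace under conjugation gives $\alpha=\tfrac12(\tilde R_{11}+\tilde R_{22})$ and $\beta=\tfrac12(\tilde R_{11}-\tilde R_{22})$. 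Hence $\alpha+\beta=\tilde R_{11}$ and $\alpha-\beta=\tilde R_{22}$: the coordinate pair $(\alpha+\beta,\alpha-\beta)$ is just the diagonal of $\tilde R$. Since conjugation by the orthogonal $O$ preserves operator norms, the admissible set becomes $\tilde R=D_P^{1/2}\tilde C D_Q^{1/2}$ with $\|\tilde C\|\le1$, where $D_P,D_Q$ are the diagonal forms. Therefore
\[
\tilde R_{11}=\sqrt{p_+q_+}\,\tilde C_{11},\qquad \tilde R_{22}=\sqrt{p_-q_-}\,\tilde C_{22}.
\]

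For the inclusion into the rectangle, observe that every entry of a contraction has absolute value at most $1$, because $|e_i^\top\tilde Ce_i|\le\|\tilde C\|$. This yields both inequalities of \eqref{eq:rect} at once.

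For the reverse inclusion, fix a target point in the rectangle. When $p_+q_+>0$, take $\tilde C_{11}=(\alpha+\beta)/\sqrt{p_+q_+}$, which lies in $[-1,1]$. When $p_+q_+=0$, the rectangle forces $\alpha+\beta=0$, and any choice works, for instance $\tilde C_{11}=0$. Define $\tilde C_{22}$ in the same way from the second inequality. Then $\tilde C=\mathrm{diag}(\tilde C_{11},\tilde C_{22})$ is diagonal with entries in $[-1,1]$, so it is a contraction. Transporting back, $C=O\tilde CO$ satisfies $\|C\|\le1$, and $R=P^{1/2}CQ^{1/2}$ has the prescribed $(\alpha,\beta)$. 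We should also check that the map $R\mapsto(\alpha,\beta)$ is the one of Lemma~\ref{lem:33alg} regardless of the $\sigma_3$ and antisymmetric components of $R$. This holds because $\alpha$ and $\beta$ are defined there as traces against $I$ and $\sigma_1$, so those extra components simply drop out.

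The only delicate point is bookkeeping rather than any real obstacle. One has to confirm the sign and ordering conventions: that $p_+$ and $q_+$ belong to the same eigenvector $(1,1)/\sqrt2$ of $\sigma_1$, so that the products pair as $p_+q_+$ and $p_-q_-$ rather than crossed, and that $\tr(R\sigma_1)$ indeed maps to $\tilde R_{11}-\tilde R_{22}$. Both facts follow from $P$ and $Q$ being polynomials in $\sigma_1$ with eigenvalue $+1$ on $(1,1)$. The degenerate cases in which some $p_\pm$ or $q_\pm$ vanish are covered by the explicit choice above and need no invertibility, consistent with Corollary~\ref{cor:gram}.
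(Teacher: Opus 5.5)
Your proof is correct, and it takes a different route from the paper's: you compute the attainable set directly, while the paper goes through duality.

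\textbf{The paper's route.} It computes the support function of the attainable set via
\[
\max_{\|C\|\le1}\tr\bigl(MP^{1/2}CQ^{1/2}\bigr)=\bigl\|Q^{1/2}MP^{1/2}\bigr\|_* .
\]
It then diagonalizes $Q^{1/2}MP^{1/2}$ in the $\sigma_1$ eigenbasis to get $\sqrt{p_+q_+}\,|\lambda+\mu|+\sqrt{p_-q_-}\,|\lambda-\mu|$. It recognizes this as the support function of the rectangle. That last step relies on the attainable set being convex, symmetric and closed.

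\textbf{Your route.} You conjugate once by $O$ and observe that $(\alpha+\beta,\alpha-\beta)$ is exactly the diagonal of $\tilde R$. The attainable set is therefore the image, under $(x,y)\mapsto(\sqrt{p_+q_+}\,x,\sqrt{p_-q_-}\,y)$, of the set of diagonals of contractions. That set is exactly $[-1,1]^2$: the entry bound gives one inclusion and diagonal contractions give the other.

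\textbf{Comparison.} Your argument avoids nuclear-norm duality and the convex-geometry step entirely. It exhibits an explicit diagonal $C$ attaining each point, and it handles the degenerate cases $p_\pm q_\pm=0$ visibly.

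What the paper's formulation buys is uniformity with Section~\ref{sec:union}. The support function $\|Q^{1/2}MP^{1/2}\|_*$ still makes sense when $P$ and $Q$ do not commute, i.e.\ for $\delta,\delta'\ne0$. There your simultaneous diagonalization is unavailable. In the paper's framing, Lemma~\ref{lem:rectangle} is simply the evaluation of $h_{0,0}$, the same object that Proposition~\ref{prop:ridge} compares against all other admissible Gram choices.
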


\begin{proof}
The set is convex and symmetric about the origin, so it is determined by its
support function. For a direction $M=\lambda I+\mu\sigma_1$ the pairing is
$\tr(MR)=2\lambda\alpha+2\mu\beta$, and
\[
  \max_{\|C\|\le1}\ \tr\!\big(M P^{1/2}CQ^{1/2}\big)
  =\big\|Q^{1/2}MP^{1/2}\big\|_* .
\]
In the $\sigma_1$ eigenbasis $Q^{1/2}MP^{1/2}$ is diagonal with entries
$\sqrt{p_\pm q_\pm}\,(\lambda\pm\mu)$, so its nuclear norm is
$\sqrt{p_+q_+}\,|\lambda+\mu|+\sqrt{p_-q_-}\,|\lambda-\mu|$. Writing
$u=\lambda+\mu$ and $v=\lambda-\mu$, the support function in the coordinates
$(\alpha+\beta,\alpha-\beta)$ is $\sqrt{p_+q_+}\,|u|+\sqrt{p_-q_-}\,|v|$; as
$u$ and $v$ range independently over $\R$, this is the support function of the
rectangle \eqref{eq:rect}.
\end{proof}

\begin{proposition}\label{prop:33}
Let $f=f_{42}+f_{33}+f_{24}$, with $f_{42}$ and $f_{24}$ produced by $P$ and
$Q$ as in \eqref{eq:PQcommute}. If $(\alpha,\beta)$ lies outside the rectangle
\eqref{eq:rect} then $f$ is not trace-positive, and a witness may be taken in
$\Sym_2(\R)$.
\end{proposition}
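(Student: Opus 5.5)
By Lemma~\ref{lem:decouple} it suffices, for $(\alpha,\beta)$ outside \eqref{eq:rect}, to find one pair $(A,B)\in\Sym_2(\R)^2$ with $T_{33}(A,B)^2>4\,T_{42}(A,B)\,T_{24}(A,B)$. The scaling $(\lambda A,\mu B)$ and the choice of sign of $t=\lambda/\mu$ then turn this pair into a pair with $\tr f<0$. So the plan is to compute the three sector traces on two explicit families, one for each side of the rectangle. First I rewrite the thresholds through the facets of Proposition~\ref{prop:42}. From $m=(u+2s)/4$ we get $p_+=(4s+u)/4$ and $p_-=-u/4$, and likewise $q_+=(4S+U)/4$ and $q_-=-U/4$. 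The side $|\alpha+\beta|\le\sqrt{p_+q_+}$ is therefore paired with the facet $4s+u\ge0$, and the side $|\alpha-\beta|\le\sqrt{p_-q_-}$ with the facet $u\le0$. This suggests reusing, for each side, the witness family that certified the corresponding facet, now deformed in both letters.

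\emph{Side $\alpha-\beta$.} Here I take the Pauli pair and perturb it so that odd-content traces appear. A natural choice is $A=a\sigma_3+\varepsilon I$, $B=b\sigma_1+\varepsilon' I$, or alternatively $B=b\sigma_1+\varepsilon\sigma_3$. On the pure Pauli pair, Proposition~\ref{prop:42} gives $T_{42}=-4a^4b^2u=16a^4b^2p_-$, and by symmetry $T_{24}=16a^2b^4q_-$. The odd words $d_1,d_2,d_3$ all have trace $0$ there, so $T_{33}$ appears only at first order in the perturbation. I expect its leading coefficient to be a fixed multiple of $(\alpha-\beta)$: the combination $d_1-2d_2+d_3$ and $d_2-d_3$ should enter with weights that kill $\alpha+\beta$. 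I also need $T_{42}$ and $T_{24}$ to change only at higher order. Matching these leading terms should give exactly $T_{33}^2\le4T_{42}T_{24}\iff 16(\alpha-\beta)^2\le uU=16p_-q_-$. If this single perturbation does not balance the constants, I will add a free parameter, such as the ratio $a/b$ or a rotation of $B$, and optimise over it.

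\emph{Side $\alpha+\beta$.} Here I follow the second facet argument and set $A=I+\varepsilon M$, $B=I+\eta N$ with $M=\sigma_3$, $N=\sigma_1$. Because every sector has vanishing abelianization, all contributions in which the perturbations commute cancel. The leading terms should then be proportional to $\varepsilon^2\eta^2\,\|[M,N]\|_F^2$ in each sector, with coefficients linear in $(4s+u)$, $(4S+U)$ and $(\alpha+\beta)$ respectively. For $T_{42}$ this is already essentially the computation in Proposition~\ref{prop:42}, with the roles of the identity and the perturbation placed in both letters. Because the thresholds are geometric means, I must keep $\varepsilon$ and $\eta$ independent (cf.\ Remark~\ref{rem:geometric}). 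The test of this step is whether the leading-order inequality reads exactly $16(\alpha+\beta)^2\le(4s+u)(4S+U)$.

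The main obstacle is the bookkeeping of the second-order (respectively first-order) expansions of the traces of $d_1,d_2,d_3$. The constants must come out exactly, so that the leading-order discriminant condition is the rectangle and not a weaker inequality. I also need to handle the degenerate cases $p_\pm q_\pm=0$, where the leading $T_{42}$ or $T_{24}$ vanishes. There a strict violation $T_{33}\neq0$ with $T_{42}T_{24}=0$ already suffices, provided the higher-order corrections to $T_{42}T_{24}$ are dominated. This is arranged by sending the perturbation parameter to $0$ after fixing $(\alpha,\beta)$.
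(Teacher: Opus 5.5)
Your side $|\alpha+\beta|\le\sqrt{p_+q_+}$ is sound and is essentially the paper's family~A. For $A=I+\varepsilon\sigma_3$, $B=I+\eta\sigma_1$ one gets $T_{42}=16\varepsilon^2\eta^2(p_++\varepsilon^2p_-)$, $T_{24}=16\varepsilon^2\eta^2(q_++\eta^2q_-)$ and $T_{33}=32\varepsilon^2\eta^2(\alpha+\beta)$. The discriminant of Lemma~\ref{lem:decouple} then gives that side in the limit $\varepsilon,\eta\to0$. (The geometric mean comes from the discriminant itself, i.e.\ from the free relative scale of $A$ and $B$; $\varepsilon=\eta$ would do.)

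The side $|\alpha-\beta|\le\sqrt{p_-q_-}$, however, cannot be obtained by perturbing the Pauli pair. When $p_-q_->0$, that pair has $T_{42}=16a^4b^2p_->0$, $T_{24}=16a^2b^4q_->0$ and $T_{33}=0$. Hence $T_{33}^2<4T_{42}T_{24}$ on a whole neighbourhood of it. The ``leading terms'' you intend to match are of different orders: $T_{33}^2=O(\varepsilon^2)$ against $4T_{42}T_{24}=O(1)$. Explicitly, for $A=a\sigma_3$, $B=b\sigma_1+\varepsilon\sigma_3$,
\[
  \frac{T_{33}^2}{4T_{42}T_{24}}=\frac{\varepsilon^2}{\varepsilon^2+b^2}\cdot\frac{(\alpha-\beta)^2}{p_-q_-}.
\]
This reaches the threshold only as $b/\varepsilon\to0$, the opposite of a small perturbation. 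Note that $a$ drops out, as it must under any rescaling of $A$ or $B$. Your other option, $A=a\sigma_3+\varepsilon I$, $B=b\sigma_1+\varepsilon'I$, is family~A in disguise: it gives $T_{33}=32a^2b^2\varepsilon\varepsilon'(\alpha+\beta)$, so it does not see $\alpha-\beta$ at all.

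The missing idea is that this side, too, needs a witness near a \emph{commuting} pair at which all three sector traces vanish to the same order, but a different commuting pair. The paper uses $A=t\sigma_3$, $B=w\sigma_3+r\sigma_1$ with $r\to0$ (family~B), for which $T_{42}=16t^4r^2p_-$, $T_{24}=16t^2r^2(w^2+r^2)q_-$ and $T_{33}=-32t^3wr^2(\alpha-\beta)$.

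The Gram picture explains why the Pauli pair is blind here. At $(a\sigma_3,b\sigma_1)$ the values of $k_1-k_2$ and $k_3-k_4$ are multiples of $\sigma_1$ and of $\sigma_3$ respectively, which are trace-orthogonal. So the pair detects $p_-$ and $q_-$ but not their coupling; the two values align only once $B$ is turned essentially onto $\sigma_3$. Your fallback of rotating $B$ would find this only if pushed to that limit, which abandons the first-order matching on which your plan rests.
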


\begin{proof}
Two matrix families suffice. In both, $A$ and $B$ lie in
$\operatorname{span}\{I,\sigma_1,\sigma_3\}$, where $\sigma_1^2=\sigma_3^2=I$
and $\sigma_1\sigma_3=-\sigma_3\sigma_1$. Expanding a word of length six in
$A,B$ therefore gives a linear combination of $I$, $\sigma_1$, $\sigma_3$ and
$\sigma_1\sigma_3$; since $\tr\sigma_1=\tr\sigma_3=\tr(\sigma_1\sigma_3)=0$, its
trace is twice the coefficient of $I$. Computing these coefficients gives the
identities below.

\emph{Family A.} For $A=-(aI+b\sigma_3)$ and $B=cI+d\sigma_1$,
\begin{equation}\label{eq:famA}
  \tr f(A,B)\;=\;16\,b^2d^2\Big[\,a^2p_++c^2q_+-2ac(\alpha+\beta)
  \;+\;b^2p_-+d^2q_-\,\Big].
\end{equation}
Put $b=d=\varepsilon>0$. The quadratic form $a^2p_++c^2q_+-2ac(\alpha+\beta)$
in $(a,c)$ has matrix
$\left(\begin{smallmatrix}p_+&-(\alpha+\beta)\\
-(\alpha+\beta)&q_+\end{smallmatrix}\right)$,
which fails to be positive semidefinite precisely when
$(\alpha+\beta)^2>p_+q_+$. In that case choose $(a,c)\in\R^2$ making it
negative -- the sign of $a$ is free, so both facets
$|\alpha+\beta|=\sqrt{p_+q_+}$ are reached by this one family -- and then
$\varepsilon$ small enough that the whole bracket is negative; since
$b^2d^2=\varepsilon^4>0$ we get $\tr f(A,B)<0$.

\emph{Family B.} For $A=t\sigma_3$ and $B=w\sigma_3+r\sigma_1$,
\begin{equation}\label{eq:famB}
  \tr f(A,B)\;=\;16\,t^2r^2\Big[\,t^2p_-+w^2q_--2tw(\alpha-\beta)
  \;+\;r^2q_-\,\Big],
\end{equation}
and the same argument applies with $r$ in the role of $\varepsilon$: the form
in $(t,w)$ fails to be positive semidefinite exactly when
$(\alpha-\beta)^2>p_-q_-$, and $t,w$ again range over all of $\R$, so both
facets are reached. Note that whenever that form is negative we have $t\ne0$,
since $w^2q_-\ge0$; hence the prefactor $16t^2r^2$ is strictly positive.

The two families exhaust the complement of \eqref{eq:rect}.
\end{proof}
\begin{remark}\label{rem:geometric}
The optimal scales are $a=\sqrt{q_+}$, $c=\sqrt{p_+}$ in family A and
$t=\sqrt{q_-}$, $w=\sqrt{p_-}$ in family B, at which the brackets reduce to
$2\sqrt{p_+q_+}\big(\sqrt{p_+q_+}-(\alpha+\beta)\big)+O(\varepsilon^2)$ and
$2\sqrt{p_-q_-}\big(\sqrt{p_-q_-}-(\alpha-\beta)\big)+O(r^2)$. The thresholds
are \emph{geometric} means of the eigenvalues. The symmetric choice $a=c$,
$t=w$ would give the arithmetic means $\tfrac12(p_++q_+)$ and
$\tfrac12(p_-+q_-)$ instead, and is sharp only when $p_+=q_+$, respectively
$p_-=q_-$ -- in particular when $P=Q=I$. This is why each family must scale $A$ and $B$ independently.
\end{remark}

\begin{remark}
Both families are required. The symmetries available on $Z$ are
$(\alpha,\beta)\mapsto(-\alpha,-\beta)$, induced by $Y\mapsto-Y$, and the
exchange $X\leftrightarrow Y$, which sends $k_1\mapsto-k_4$, $k_2\mapsto-k_3$
and therefore fixes each of $d_1,d_2,d_3$: it acts trivially on
$(\alpha,\beta)$ while interchanging $p_\pm$ with $q_\pm$. Neither maps the
pair of facets $|\alpha+\beta|=\sqrt{p_+q_+}$ to
$|\alpha-\beta|=\sqrt{p_-q_-}$. Structurally, $R=\alpha I+\beta\sigma_1$ has eigenvalues
$\alpha\pm\beta$, and family A activates only the direction
$k_1+k_2\leftrightarrow k_3+k_4$ while family B activates only
$k_1-k_2\leftrightarrow k_3-k_4$.
\end{remark}

\begin{remark}\label{rem:zeromin}
Inside the rectangle nothing needs proving: $f\in\Th$ there, so $f$ is
trace-positive by~\eqref{eq:inclusion}. That the minimum is exactly zero
is also immediate, since every $f\in Z$ has zero trace on
commuting pairs.
\end{remark}

\section{Proof of Theorem~\ref{thm:main}}\label{sec:proof}

(iii)$\Rightarrow$(iv) is Lemma~\ref{lem:kred} together with
Corollary~\ref{cor:gram}: a positive semidefinite $4\times4$ matrix factors
into at most four rank-one terms, giving at most four squares. (iv)$\Rightarrow$(iii) is trivial, and
(iii)$\Rightarrow$(i)$\Rightarrow$(ii) is \eqref{eq:inclusion}.

It remains to prove (ii)$\Rightarrow$(iii). Let $f\in Z$ satisfy
$\tr f(A,B)\ge0$ on $\Sym_2(\R)$. By Lemma~\ref{lem:decouple} with $N=2$ we
obtain $T_{42}\ge0$ and $T_{24}\ge0$ on $\Sym_2(\R)$. By
Proposition~\ref{prop:42} this places $f_{42}$ and $f_{24}$ in the cones
\eqref{eq:42cone}. Take for $P$ and $Q$ the matrices constructed in the proof
of that proposition, namely $P=\tfrac s2I+m\sigma_1$ and
$Q=\tfrac S2I+n\sigma_1$ as in \eqref{eq:PQcommute}, with eigenvalues
$p_\pm,q_\pm\ge0$ as in \eqref{eq:pq}. (This is the case $\delta=\delta'=0$;
Corollary~\ref{cor:union} is not needed here, see Remark~\ref{rem:notneeded}.)

Proposition~\ref{prop:33} now forces $(\alpha,\beta)$ into the rectangle
\eqref{eq:rect}: otherwise one of the families A, B produces a pair in
$\Sym_2(\R)$ with negative trace. By Lemma~\ref{lem:rectangle} there is
therefore $C$ with $\|C\|\le1$ such that $R=P^{1/2}CQ^{1/2}$ produces
$f_{33}$, and by Corollary~\ref{cor:gram} the resulting $H$ is positive
semidefinite. Hence $f\in\Th_3$. \qed

\begin{remark}
The proof of (ii)$\Rightarrow$(iii) never leaves $\Sym_2(\R)$: every
certificate used -- the Pauli pair, the perturbation $I+\varepsilon M$, and
the two families \eqref{eq:famA}, \eqref{eq:famB} -- is a pair of $2\times2$
matrices, and the sector-decoupling scaling $X\mapsto tX$ preserves matrix
size. This is the source of clause (ii), and matches the corresponding clause
of the degree-four theorem.
\end{remark}

Written out in the coordinates of $f$, the cone is defined by a handful of inequalities.

\begin{corollary}\label{cor:explicit}
Let $f\in Z$ have coordinates $(s,u)$, $(S,U)$ and $(\alpha,\beta)$ in the
three content sectors. Then $f$ is trace-positive -- equivalently
$f\in\Th_3$ -- if and only if
\begin{equation}\label{eq:explicit1}
  4s+u\ \ge\ 0,\qquad u\ \le\ 0,\qquad 4S+U\ \ge\ 0,\qquad U\ \le\ 0,
\end{equation}
\begin{equation}\label{eq:explicit2}
  16(\alpha+\beta)^2\ \le\ (4s+u)(4S+U),
  \qquad
  16(\alpha-\beta)^2\ \le\ uU .
\end{equation}
\end{corollary}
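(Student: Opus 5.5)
The corollary is a translation of Theorem~\ref{thm:main} into coordinates, so I would simply rewrite each ingredient of the already-proved characterization in terms of $(s,u)$, $(S,U)$, $(\alpha,\beta)$. By the theorem, for $f\in Z$ trace-positivity, (ii) and membership in $\Th_3$ are equivalent, so it suffices to show that the conjunction of \eqref{eq:explicit1} and \eqref{eq:explicit2} is equivalent to: $f_{42}$ and $f_{24}$ lie in the cones \eqref{eq:42cone}, and $(\alpha,\beta)$ lies in the rectangle \eqref{eq:rect} built from $P=\tfrac s2I+m\sigma_1$ and $Q=\tfrac S2I+n\sigma_1$. Every $f\in Z$ has these coordinates, by the surjectivity remark after Corollary~\ref{cor:gram}.

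\textbf{Linear conditions.} Proposition~\ref{prop:42} characterizes $T_{42}\ge0$ by $s\ge0$, $-4s\le u\le0$. As noted at the end of its proof, $s\ge0$ follows from the other two, so this is exactly $4s+u\ge0$, $u\le0$. Exchanging $X$ and $Y$ gives $4S+U\ge0$, $U\le0$. These are the four inequalities \eqref{eq:explicit1}.

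\textbf{Eigenvalues.} Substituting $m=(u+2s)/4$ into \eqref{eq:pq} gives
\[
p_+=\tfrac s2+\tfrac{u+2s}{4}=\tfrac{4s+u}{4},\qquad
p_-=\tfrac s2-\tfrac{u+2s}{4}=-\tfrac u4 .
\]
Likewise $q_+=(4S+U)/4$ and $q_-=-U/4$. Hence $p_+q_+=(4s+u)(4S+U)/16$ and $p_-q_-=uU/16$. Under \eqref{eq:explicit1} all four eigenvalues are non-negative, so squaring the rectangle inequalities $|\alpha\pm\beta|\le\sqrt{p_\pm q_\pm}$ is reversible. Multiplying by $16$ then gives \eqref{eq:explicit2}.

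\textbf{Assembling the equivalence.} If $f$ is trace-positive, Lemma~\ref{lem:decouple} gives $T_{42},T_{24}\ge0$, hence \eqref{eq:explicit1}. Proposition~\ref{prop:33} then puts $(\alpha,\beta)$ in the rectangle, which is \eqref{eq:explicit2}. Conversely, \eqref{eq:explicit1} makes $P,Q\succeq0$, and \eqref{eq:explicit2} puts $(\alpha,\beta)$ in the rectangle. Lemma~\ref{lem:rectangle} then supplies a contraction $C$ with $R=P^{1/2}CQ^{1/2}$, and Corollary~\ref{cor:gram} gives $f\in\Th_3$, hence trace-positive.

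The only point needing care is the bookkeeping: that $m=(u+2s)/4$ is the right identification, and that the signs come out as $p_-=-u/4$ rather than $+u/4$. Both follow directly from Lemma~\ref{lem:42alg}, where the coefficient of $c_2$ is $u=4m-2s$. No substantive obstacle remains.
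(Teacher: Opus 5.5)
Your proposal is correct and follows essentially the same route as the paper. You substitute $m=(u+2s)/4$ from Lemma~\ref{lem:42alg} to identify $p_\pm,q_\pm$ with the facet functions $\tfrac{4s+u}{4}$, $-\tfrac u4$ (and their $(S,U)$ analogues), note that squaring the rectangle inequalities is reversible under \eqref{eq:explicit1}, and conclude via Theorem~\ref{thm:main}; the only difference is cosmetic, in that you re-run the ingredients of the (ii)$\Rightarrow$(iii) argument (Lemma~\ref{lem:decouple}, Propositions~\ref{prop:42} and~\ref{prop:33}, Lemma~\ref{lem:rectangle}, Corollary~\ref{cor:gram}) inline instead of citing the theorem directly.
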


\begin{proof}
By Lemma~\ref{lem:42alg} we have $m=(u+2s)/4$, so \eqref{eq:pq} gives
\[
  p_+=\tfrac{4s+u}{4},\qquad p_-=-\tfrac u4 ,
\]
and symmetrically $q_+=(4S+U)/4$, $q_-=-U/4$. Thus \eqref{eq:explicit1} says
exactly that $p_\pm\ge0$ and $q_\pm\ge0$, which by
Proposition~\ref{prop:42} is the sector condition \eqref{eq:42cone}; the
inequality $s\ge0$ is immediate because $s=p_++p_-$. Given
\eqref{eq:explicit1}, both right-hand sides of \eqref{eq:explicit2} are
non-negative, so squaring is reversible and \eqref{eq:explicit2} is exactly
the rectangle \eqref{eq:rect}. Theorem~\ref{thm:main} identifies the
conjunction with trace-positivity.
\end{proof}

\begin{remark}
The eigenvalues of $P$ are therefore the two facet functions of the sector
cone, and those of $Q$ likewise. This is the cleanest form of the matching
between the two sides of the problem: the extreme sectors fix the spectra of
the diagonal blocks, and the middle sector is confined by their geometric
means.
\end{remark}

\begin{remark}\label{rem:verified}
The algebraic identities on which the proof rests -- Lemmas~\ref{lem:42alg}
and~\ref{lem:33alg}, the \mbox{$\varepsilon$-expansion} in
Proposition~\ref{prop:42}, and the two families \eqref{eq:famA},
\eqref{eq:famB} -- are finite computations in the free algebra and can be
checked by hand; each has also been re-verified symbolically over exact
arithmetic. We add one practical caution for anyone computing in this cone
numerically rather than symbolically. Just outside the boundary of
\eqref{eq:rect} the quantity $\min\tr f$ can be extremely small: at a relative
overshoot of $10^{-3}$ in the rectangle thresholds we have seen it at
$10^{-12}$, and a multistart minimization in double precision then returns
machine zero and reports $f$ as trace-positive, unchanged by raising the
number of restarts from $250$ to $4000$. The semidefinite program on the
Gram matrix of Corollary~\ref{cor:gram} has no such difficulty, returning a
smallest eigenvalue of order $-10^{-4}$ on the same instances. Near the
boundary the closed form of Corollary~\ref{cor:explicit} or the semidefinite
program should be preferred to a search.
\end{remark}

\section{Open questions}\label{sec:open}

\begin{enumerate}
\item The value of $d_{\min}$ -- defined for \emph{symmetric} elements, see
      \S\ref{sec:settle} -- remains open in $4<d_{\min}\le12$.
      Theorem~\ref{thm:main} removes the zero-abelianization route at degree
      six; the remaining possibilities at that degree have non-zero
      abelianization which is nevertheless a sum of squares.
\item Does the analogue of Theorem~\ref{thm:main} hold in more variables?
      The argument of the K-reduction lemma adapts, though with more contents
      to handle, while the sector structure and the two-dimensionality of
      the $(3,3)$ image are special to two letters. In three variables the gap
      already appears in degree four~\cite{Quarez}, so the analogous question
      there concerns quartics rather than sextics.
\item Homogenization is unavailable in the noncommutative setting (see
      \cite{Bur-thesis}, and the remarks in \cite{Quarez}), so the
      degree-four theorem for two letters does not settle homogeneous
      $f\in\R\langle X_1,X_2,X_3\rangle_4$, the analogue of Hilbert's
      ternary quartics; the examples of \cite{Quarez} are inhomogeneous. To
      our knowledge it is not known whether every trace-positive such $f$
      lies in $\Th$. The methods here are sector-based and may bear on that
      question.
\end{enumerate}

\section*{Declarations}

\noindent\textbf{Funding.} No funding was received for conducting this study.

\medskip
\noindent\textbf{Competing interests.} The author has no competing interests
to declare.

\medskip
\noindent\textbf{Data and code availability.} No datasets were generated or
analyzed. All results are established analytically; the identities of
Sections~\ref{sec:42} and~\ref{sec:33} are finite computations that can be
verified by hand. The symbolic verification scripts and the numerical
experiment reported in Remark~\ref{rem:verified} are available from the author
on request.

\end{document}